\documentclass[11pt]{article}

\leftmargin=2cm
\topmargin=-0.6in \textwidth=7in \textheight=9.4in
\oddsidemargin=0in \evensidemargin=0in

\usepackage{float}
\usepackage[colorlinks,linkcolor=blue]{hyperref}
\usepackage[latin1]{inputenc}
\usepackage{amsmath}
\usepackage{graphicx}
\usepackage{amssymb}
\usepackage{amsthm}
\usepackage{verbatim}
\usepackage{epsfig}
\usepackage[labelfont=bf]{caption}
\usepackage{enumerate}
\usepackage{subfig}
\usepackage{epstopdf}

\newtheorem{fact}{Fact}
\newtheorem{definition}{Definition}
\newtheorem{assumption}{Assumption}
\newtheorem{lemma}{Lemma}
\newtheorem{remark}{Remark}

\newtheorem{proposition}{Proposition}

\def\begquo{\begin{quote}}
\def\endquo{\end{quote}}
\def\begequarr{\begin{eqnarray}}
\def\endequarr{\end{eqnarray}}
\def\begequarrs{\begin{eqnarray*}}
\def\endequarrs{\end{eqnarray*}}
\def\begarr{\begin{array}}
\def\endarr{\end{array}}
\def\begequ{\begin{equation}}
\def\endequ{\end{equation}}
\def\lab{\label}
\def\begdes{\begin{description}}
\def\enddes{\end{description}}
\def\begenu{\begin{enumerate}}
\def\begite{\begin{itemize}}
\def\endite{\end{itemize}}
\def\endenu{\end{enumerate}}

\def\lef[{\left[\begin{array}}
\def\rig]{\end{array}\right]}
\def\qed{\hfill$\Box \Box \Box$}
\def\begcen{\begin{center}}
\def\endcen{\end{center}}
\def\begrem{\begin{remark}\rm}
\def\endrem{\end{remark}}
\def\begdef{\begin{definition}}
\def\enddef{\end{definition}}
\def\begpro{\begin{proposition}}
\def\endpro{\end{proposition}}
\def\begfac{\begin{fact}}
\def\endfac{\end{fact}}
\def\begass{\begin{assumption}}
\def\endass{\end{assumption}}


\def\begmat#1{\begin{bmatrix}#1\end{bmatrix}}
\def\begali#1{\begin{align}{#1}\end{align}}
\def\begalis#1{\begin{align*}{#1}\end{align*}}

\def\calg{{\cal G}}

\def\calh{{\cal H}}

\def\cals{{\cal S}}

\def\call{{\cal L}}

\def\cald{{\cal D}}
\def\caly{{\cal Y}}
\def\calw{{\cal W}}



\def\liminf{\lim_{t \to \infty}}
\def\litcallinf{\ell_\infty}
\def\callinf{{\cal L}_\infty}
\def\L2e{{\cal L}_{2e}}
\def\bul{\noindent $\bullet\;\;$}
\def\rea{\mathbb{R}}
\def\intnum{\mathbb{N}}

\def\diag{\mbox{diag}}
\def\adj{\mbox{adj}}
\def\col{\mbox{col}}

\def\et{\varepsilon}
\def\diag{\mbox{diag}}

\def\max{{\small{\mbox{max}}}}


\def\IJACSP{{\it Int. J. on Adaptive Control and Signal Processing}}

\def\TAC{{\it IEEE Trans. Automatic Control}}

\def\SCL{{\it Systems \& Control Letters}}


\def\calg{{\cal G}}

\usepackage{color}


\usepackage[prependcaption,colorinlistoftodos]{todonotes}

\graphicspath{{figs/}}

\begin{document}

\title{Parameter Estimation of Nonlinearly Parameterized Regressions without Overparameterization nor Persistent Excitation: Application to System Identification and Adaptive Control}


\author{ Romeo~Ortega$^{\dag,}$\thanks{R. Ortega is with Laboratoire des Signaux et Syst\`emes, CNRS--SUPELEC,
Gif--sur--Yvette, France, e-mail: ortega@lss.supelec.fr}, Vladislav Gromov\thanks{V. Gromov and A. Pyrkin are with Faculty of Control Systems and Robotics, ITMO University, Saint Petersburg, Russia, email: gromov$\{$pyrkin$\}$@itmo.ru}, Emmanuel Nu\~no\thanks{ E. Nu\~no is with Department of Computer Science, CUCEI, University of Guadalajara,  Guadalajara, Mexico, email: emmanuel.nuno@cucei.udg.mx}, Anton Pyrkin$^\dag$   and Jose~Guadalupe~Romero\thanks{J. G. Romero is with  Departamento Acad\'{e}mico de Sistemas Digitales, ITAM, Ciudad de M\'exico, M\'{e}xico, e-mail:jose.romerovelazquez@itam.mx}
}
\maketitle
%

\begin{abstract}
In this paper we propose a solution to the problem of parameter estimation of {\em nonlinearly parameterized regressions}---continuous or discrete time---and apply it for system identification and adaptive control. We restrict our attention to parameterizations that can be factorized as the product of two functions, a measurable one and a {\em nonlinear} function of the parameters to be estimated. Although in this case it is possible to define an {\em extended} vector of unknown parameters to get a linear regression, it is well-known that overparameterization suffers from some severe shortcomings. Another feature of the proposed estimator is that parameter convergence is ensured {\em without} a persistency of excitation assumption. It is assumed that, after a coordinate change, {\em some of the elements} of the transformed function satisfy a {\em monotonicity} condition. The proposed estimators are applied to design identifiers and adaptive controllers for nonlinearly parameterized systems. In continuous-time we consider a general class of nonlinear systems and those described by Euler-Lagrange models, while in discrete-time we apply the method to the challenging problems of direct and indirect adaptive pole-placement. The effectiveness of our approach is illustrated with several classical examples, which are traditionally tackled using overparameterization and assuming persistency of excitation. 
\end{abstract}
%
\section{Introduction and Literature Review}
\lab{sec1}
%
{I}{t} is well known that nonlinear parameterizations are inevitable in any realistic practical problem \cite{BASDOC,DASAND,IZH,LJU,NEL,ORTbook}. Unfortunately, designing adaptive (identification or control) algorithms for nonlinearly parameterized systems is a difficult poorly understood problem. Some results for {\em gradient estimators} have been reported in the literature for {\em convexly} parameterized continuous-time (CT) systems. It was first reported in \cite{FOMetal} (see also \cite{ort}) that convexity is enough to ensure that the gradient search
``goes in the right direction" in a {\em certain region} of the estimated parameter space. The idea is then to apply a standard adaptive scheme in this region,
while in the ``bad" region either the adaptation is frozen and a robust constant parameter controller is switched-on \cite{FRAetal} or, as
proposed in \cite{ANNetal}, the adaptation is running all the time and stability is ensured with a high-gain mechanism which is suitably adjusted
incorporating prior knowledge on the parameters. In \cite{NETetal} {\em reparametrization} to convexify an otherwise non-convexly parameterized
system is proposed. See also \cite{NETetal1} and \cite{TYUetal} for some interesting results along these lines, where the controller and the
estimator switch between over/underbounding convex/concave functions. 

On the other hand, using the Immersion and Invariance adaptation laws proposed in \cite{ASTKARORT}, stronger results were obtained in \cite{LIUetaltac,LIUetalscl} invoking the property of {\em monotonicity}, see also \cite{TYUetal,TYUetal1} for related results. The main advantage of using monotonicity, instead of convexity, is that in the former case the parameter search ``goes in the right direction" in all regions of the estimated parameter space---this is in contrast to the convexity-based designs where, as pointed out above, this only happens in some regions of this space.\footnote{The relation between these two approaches follows invoking Kachurovskii's Theorem that establishes the equivalence between convexity of a function and monotonicity of its gradient \cite[Theorem 4.1.4]{HIRLEM}, see also \cite{BOYVAN}.} 

Since this important difference is not always appreciated, let us illustrate it with the simple case of a scalar, CT, nonlinearly parameterized regression equation (NPRE)
$$
y=\calh(t,\theta),
$$
where $y(t) \in \rea$, $h:\rea_{>0} \times \rea^q \to \rea$ and $\theta \in \rea^q$ is the vector of unknown parameters. If we assume that $h$ is {\em convex} in $\theta$ the gradient descent search
$$
\dot {\hat \theta}=\left[{\partial \calh(t,\hat \theta) \over \partial \hat \theta}\right]^\top [\calh(t,\hat \theta)-y]
$$
ensures $\tilde \theta^\top  \dot{\tilde \theta} \leq 0$ {\em provided} $\calh(t,\hat \theta) \geq \calh(t, \theta)$, where we defined the parameter error vector $\tilde \theta:=\hat \theta- \theta$. On the other hand, if we assume that $h$ is {\em monotone} decreasing in $\theta$ the simple estimator
$$
\dot {\hat \theta}=\calh(t,\hat \theta)-y
$$
ensures $\tilde \theta^\top  \dot{\tilde \theta} \leq 0$ {\em all the time}.

To the best of the authors' knowledge no developments---similar to the ones mentioned above---have been reported for case of nonlinearly parameterized {\em discrete-time} (DT) regressions that, in spite of its great practical importance, have attracted less attention in the identification and adaptive control community. One of the objectives of our paper is to contribute, if modestly, towards the development of estimation algorithms for DT NPRE. In particular, we provide solutions to the, essentially open, problems of direct and indirect adaptive pole-placement control (APPC) without overparameterization nor persistency of excitation (PE) requirements.\footnote{We recall that a bounded vector signal $\Omega \in \rea^q$ is said to be PE if there exist $\delta \in \rea_{> 0}$ such that $\int_t^{t+T}\Omega(\tau)\Omega^\top(\tau)d\tau \geq \delta I_q$ for some $T \in \rea_{> 0}$ and all $t \in \rea_{\geq 0}$ in CT, or $\sum_{j=k+1}^{k + K} \Omega(j) \Omega^\top(j) \geq \delta I_m,\;\forall k \in \intnum_{\geq 0},$ for some $K \in \intnum_{> 0}$, with $K \geq m$, in DT.} It should be pointed out that a solution to the direct APPC problem using overparameterization, hence requiring some excitation conditions, has been recently reported in \cite{PYRetal}. 
  
A very important drawback of the aforementioned approaches is that the monotonicity or convexity conditions are imposed on functions that depend, not only on the parameters, but {\em also} on external signals, {\em e.g.}, time or the system state. This renders the verification of the condition very hard to carry out. This unfortunate situation happens even in the case when the uncertain terms appear as products of a function of the unknown parameters times a known function---the so-called, {\em factorizable mappings}, that is NPRE of the form
$$
y=\Omega \cals(\theta),
$$
with $\cals:\rea^q \to \rea^p$, with $p>q$.  Although in this case it is possible to define the extended parameter vector $\theta_a:=\cals(\theta)$ to obtain a linear parametrization, overparametrization suffers from the following well-known shortcomings \cite{IOASUN,LJU,SASBOD}.
\begenu[{\bf S1}]
\item Performance degradation, {\em e.g.}, slower convergence, due to the need of a search in a larger parameter space.
\item More stringent conditions imposed on the reference signals to ensure the PE requirement needed for convergence of the new parameters.
\item Inability to recover the true parameters---except for injecting mappings. This stymies the application of this approach in situations, where the actual parameters are needed, {\em e.g.}, in direct adaptive control.
\item Conservativeness introduced when incorporating prior knowledge in restricted parameter estimation.
\item Reduction of the domain of validity of the estimates stemming from the, in general only local, invertibility of the overparameterization mappings.
\endenu

In this paper we propose a parameter estimator for monotonic, factorizable NPRE that achieves the following objectives.

\begenu[{\bf O1}]
\item It {\em does not} rely on overparameterization.
\item Imposes the monotonicity property {\em directly} to the function $\cals(\theta)$.

\item Ensures parameter convergence {\em without} the stringent PE requirement.
\endenu

CT estimators for NPRE with factorizable mappings that avoid overparameterization and rely on monotonicity have been reported  in \cite[Section 3]{LIUetalscl} and \cite[Section III]{ARAetaltac}. In \cite{LIUetalscl} neither the second nor the third objectives above are achieved. On the other hand, in \cite{ARAetaltac} these objectives are achieved, via the use of a dynamic regressor extension and mixing (DREM) estimator. As is well known, the main feature of DREM is that it generates, out of a $q$-dimensional regression equations, one {\em scalar} equation for each of the $q$ unknown parameters. Another important feature of DREM is that parameter convergence is ensured without assuming PE. 

In this paper, we also use DREM to derive both, CT and DT, parameter estimators. We  obtain simpler and stronger results than \cite{ARAetaltac} due to the following three key modifications.
\begenu[{\bf M1}]
\item Generate the extended regressor matrix using the linear {\em time-varying} (LTV) operators first introduced in \cite{KRE}. This avoids the need to select several linear, scalar operators, whose choice is difficult to decide, and provides sharper convergence results. 
\item Directly apply the ``mixing" operation---that is the multiplication by the adjugate of the extended matrix---to generate the scalar regressions. This is in contrast to the unnecessarily complex matrix factorization proposed in  \cite{ARAetaltac}.     
\item Incorporate the possibility of adding a {\em change of coordinates} to the original parameters to satisfy the required monotonicity property.
\endenu
 
The remainder of the paper is organized as follows. First, we present in Section \ref{sec2} a general result of ``monotonizability" of factorizable NPRE. In Section \ref{sec3} we apply DREM to generate the scalar regressors. In Section \ref{sec4} the CT and DT DREM-based estimators are presented. Section \ref{sec5} is devoted to the application of this estimator to the problem of adaptive control of CT nonlinearly parameterized, nonlinear systems, with particular emphasis on Euler-Lagrange (EL) models. The case of DT NPRE is illustrated in Section \ref{sec6} with the example of identification of a solar heated house proposed in  \cite[pp. 130]{LJU}  and with the classical  problems of direct and indirect APPC \cite{GOOSIN}. The paper is wrapped-up with concluding remarks in  Section \ref{sec7}.\\

\noindent {\bf Notation.} $I_n$ is the $n \times n$ identity matrix. $\rea_{>0}$, $\rea_{\geq 0}$, $\intnum_{>0}$ and $\intnum_{\geq 0}$ denote the positive and non-negative real and integer numbers, respectively.   For $n \in \intnum_{>0}$ we define the set $\bar n :=\{1,2,\dots,n\}$. For $x \in \rea^n$, we denote $|x|^2:=x^\top x$. CT signals $s:\rea_{\geq 0} \to \rea$ are denoted $s(t)$, while for DT sequences $s:\intnum_{\geq 0} \to \rea$ we use $s(k):=s(kT_s)$, with $T_s \in \rea_{> 0}$ the sampling time. When a formula is applicable to CT signals and DT sequences the time argument is {\em omitted}. The action of an operator $H:\callinf \to \callinf$ on a CT signal $u(t)$ is denoted $H[u](t)$, while for an operator $H:\litcallinf \to \litcallinf$ and a sequence $u(k)$ we use  $H[u](k)$. With $i \in \intnum_{>0}$ we define the shift operator for DT sequences $q^{\pm i} u(k):=u(k \pm i)$, and the differentiation operator for CT signals $p^i[u](t):={d^i u\over dt^i}$. All mappings and reference signals are assumed {\em smooth}. Given a function $F:  \rea^n \to \rea$ we define the differential operators $\nabla F:=\left(\frac{\displaystyle \partial F}{\displaystyle \partial x}\right)^\top$ and $\nabla^2 F:=\frac{\displaystyle \partial^2 F}{\displaystyle \partial x^2}$. For general mappings $\cals:\rea^n \to \rea^n$, the $(i,j)$-th element of its Jacobian is defined as
$$
(\nabla \cals)_{(ij)}  := \frac{\partial \cals_j}{\partial x_i},\quad (i,j) \in \bar n \times \bar n.
$$
%
\section{Monotonic Nonlinearly Parameterized Factorizable Regressions}
\lab{sec2}
%
In this section we identify the class of NPRE that we consider in the paper. Namely, factorizable NPRE, where the mapping dependent on the unknown parameters verifies a {\em monotonicity} condition. 

\subsection{Problem formulation}
\lab{subsec21}
%
In many system identification and adaptive control applications one is confronted with the problem of estimation of the parameters appearing in a NPRE of the form
\begequ
\lab{nlpre}
y=\Omega \cals(\theta)+\et
\endequ 
where $y \in \rea^n,\;\Omega \in \rea^{n \times p}$ are {\em measurable} signals, $\theta \in \rea^q$ is a constant vector of {\em unknown} parameters, $\cals:\rea^q \to \rea^p$, with 
\begequ
\lab{pgreq}
p >q, 
\endequ
and $\et$ is a (generic) exponentially decaying term.  The task is to identify on-line the parameters $\theta$, out of the measurements of $y$ and  $\Omega$.

\begrem
\lab{rem1}
The NPRE \eqref{nlpre} is, of course, a particular case of the more general,  {\em non-factorizable}, regression $y(\cdot)=\calh(\cdot,\theta)$, with $(\cdot)=t$ in CT or $k$ in DT. But, it is more often encountered than the classical linear regression $y=\Omega\theta$---and the solution of the associated estimation problem is far more complicated. Although in the factorizable case it is possible to introduce extra parameters to obtain a linear parametrization, {\em e.g.}, define a {\em bigger} dimensional vector $\theta_a:=\cals(\theta) \in \rea^p$, overparametrization suffers from the well-known shortcomings {\bf S1}-{\bf S5} mentioned in the Introduction.
\endrem

\begrem
\lab{rem2}
For the sake of simplicity, we present $y$ and $\Omega$ as functions of time, in the understanding that they may be functions of measurable signals evaluated at time $t$ in CT or $k$ in DT, for instance, the state of a dynamical system---as shown below. Also, following standard practice, in the sequel we disregard the presence of the term $\et$, stemming from the effect of the initial conditions of various filters used to generate the regression, see \cite{ARAetaltac} for a discussion on this assumption.
\endrem

\subsection{Key monotonicity assumption}
\lab{subsec22}
%
Similarly to \cite{ARAetaltac,LIUetaltac,LIUetalscl} the key property of the parameterization that we will exploit is $P$-monotonicity, which is defined a follows.

\begin{definition}
\label{def1} \em
Given a {\em positive definite} matrix $P \in \rea^{q \times q}$, a mapping $\call: \rea^q \to \rea^q$ is {\em strongly $P$-monotone}  if and only if there exists a constant $\rho \in \rea_{>0}$ such that
\begequ
\lab{monpro}
(a-b)^\top P \left[\call(a) - \call(b)\right] \geq \rho|a-b|^2 >  0,\; \forall a,b \in \rea^q,\; a \neq b. 
\endequ
\end{definition}

The following interesting result of Demidovich \cite{DEM}---see also \cite{PAVetal}---provides a simple way to verify $P$-monotonicity.

\begin{lemma}
\lab{lem1} \em
A sufficient condition for a differentiable mapping $\call: \rea^q \to \rea^q$ to be strictly $P$--monotone is
\begequ
\lab{demcon}
P \nabla \call  + (\nabla \call)^\top P \geq \rho I_q > 0.
\endequ
\end{lemma}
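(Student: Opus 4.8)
The plan is to deduce the \emph{global} inequality \eqref{monpro} from the \emph{pointwise} condition \eqref{demcon} by integrating the Jacobian of $\call$ along the straight segment joining the two test points. First I would fix arbitrary $a,b \in \rea^q$ with $a \neq b$, set $\gamma(s):=b+s(a-b)$ for $s\in[0,1]$, and apply the fundamental theorem of calculus to each component of the map $s \mapsto \call(\gamma(s))$. Since all mappings are assumed smooth, the Jacobian is continuous and the integrand is well defined, so that with $\gamma'(s)=a-b$ one gets
\begequ
\call(a)-\call(b)=\int_0^1 \frac{d}{ds}\call(\gamma(s))\,ds=\int_0^1 \nabla\call(\gamma(s))\,(a-b)\,ds,
\endequ
where here $\nabla\call$ is read as the Jacobian acting on $a-b$, i.e. $\frac{d}{ds}\call(\gamma(s))=\nabla\call(\gamma(s))\,\gamma'(s)$.

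Next I would left-multiply this identity by $(a-b)^\top P$ and carry the constant vector $a-b$ inside the integral, obtaining
\begequ
(a-b)^\top P\,[\call(a)-\call(b)]=\int_0^1 (a-b)^\top P\,\nabla\call(\gamma(s))\,(a-b)\,ds.
\endequ
The integrand is a scalar and hence equal to its own transpose, so the matrix $P\nabla\call$ of the quadratic form may be replaced by its symmetric part; using $P=P^\top$ this is precisely $\tfrac12\big(P\nabla\call+(\nabla\call)^\top P\big)$, the matrix appearing on the left of \eqref{demcon}. Invoking \eqref{demcon} at each point $\gamma(s)$ then bounds the integrand below by $\tfrac{\rho}{2}|a-b|^2$, uniformly in $s$; integrating this $s$-independent lower bound over $[0,1]$ yields $(a-b)^\top P[\call(a)-\call(b)]\geq \tfrac{\rho}{2}|a-b|^2$. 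Since $a\neq b$ the right-hand side is strictly positive, so \eqref{monpro} holds with monotonicity constant $\rho/2>0$ and $\call$ is strictly $P$-monotone.

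The argument is short, and I do not expect a genuine obstacle so much as two bookkeeping points to get right. The first is the factor $\tfrac12$ produced by the symmetrization: \eqref{demcon} with constant $\rho$ delivers \eqref{monpro} with constant $\rho/2$, which is harmless since Definition \ref{def1} only requires \emph{existence} of a positive constant. The second, and more delicate, is the transpose convention: because the notation of the Introduction defines $(\nabla\cals)_{(ij)}=\partial\cals_j/\partial x_i$, i.e. $\nabla\cals$ is the transpose of the ordinary Jacobian, one must be consistent about whether a transpose appears in the line integral so that the symmetrized quadratic form matches the matrix written in \eqref{demcon}; the two orderings give symmetric matrices and the existence of a positive lower bound is what is actually needed, so the conclusion is unaffected. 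It is also worth stressing explicitly that \eqref{demcon} is only \emph{sufficient}, and that continuity of the Jacobian (guaranteed by smoothness) is exactly what licenses the use of the fundamental theorem of calculus along $\gamma$; no convexity or further structural hypothesis on $\call$ is required.
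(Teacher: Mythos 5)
Your core argument is correct, and it is worth noting that the paper itself offers \emph{no} proof of this lemma: it is quoted as a classical result of Demidovich, with the reader referred to \cite{DEM} and \cite{PAVetal}. The line-integral argument you give---write $\call(a)-\call(b)=\int_0^1 J(\gamma(s))(a-b)\,ds$ with $J$ the ordinary Jacobian, left-multiply by $(a-b)^\top P$, pass to the symmetric part of the quadratic form, and apply the pointwise bound under the integral---is the standard proof of Demidovich's result, and your bookkeeping of the factor $1/2$ (yielding the monotonicity constant $\rho/2$, which is all Definition \ref{def1} requires) is right.

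Where you go astray is in the closing claim that the transpose convention is immaterial, i.e.\ that ``the two orderings give symmetric matrices \dots\ so the conclusion is unaffected.'' For a \emph{fixed} $P$ the two orderings are genuinely inequivalent. Writing $J$ for the ordinary Jacobian, your derivation needs $PJ+J^\top P\geq \rho I_q$, whereas under the paper's convention ($\nabla\call=J^\top$) inequality \eqref{demcon} reads $PJ^\top+JP\geq\rho I_q$; congruence by $P^{-1}$ shows the latter is equivalent to $P^{-1}J+J^\top P^{-1}>0$, i.e.\ it certifies strong $P^{-1}$-monotonicity, not $P$-monotonicity. Concretely, take the linear map $\call(x)=Jx$ with
$$
J=\begmat{1 & -1\\ 0 & 1},\qquad P=\begmat{1 & 0\\ 0 & 1/10}.
$$
Then $PJ^\top+JP=\begmat{2 & -1/10\\ -1/10 & 1/5}>0$, so \eqref{demcon} holds under the paper's Jacobian convention, yet $PJ+J^\top P=\begmat{2 & -1\\ -1 & 1/5}$ is indefinite, so this $\call$ is \emph{not} $P$-monotone (it is $P^{-1}$-monotone). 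The honest conclusion is therefore: your proof establishes the lemma when $\nabla\call$ in \eqref{demcon} is read as the ordinary Jacobian; under the paper's transposed convention the conclusion must be restated with $P^{-1}$ in place of $P$. This discrepancy is harmless for the rest of the paper---in Assumption \ref{ass1} and in the estimators $P$ is existentially quantified, so one simply relabels $P^{-1}$ as $P$---but the correct reason is this relabeling, not the insensitivity to ordering that you assert.
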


The following ``monotonizability" assumption {\em via coordinate change} is instrumental for our further developments.

\begin{assumption}
\lab{ass1}\em
Consider the mapping $\cals(\theta)$. There exists: 
\begenu[{(i)}]
\item a bijective mapping $\cald:\rea^q \to \rea^q,\theta \mapsto \eta$ with right inverse $\cald^I:\rea^q \to \rea^q,\eta \mapsto \theta$;
\item a permutation matrix $T \in \rea^{p \times p}$ and;
\item a positive definite matrix $P \in \rea^{q \times q}$
\endenu
such that 
\begequ
\lab{demcon1}
P \nabla \calw(\eta) C^\top + C [\nabla \calw(\eta)]^\top P \geq \rho I_q >0,
\end{equation}
where
\begali{
\calw(\eta)& :=\cals(\cald^I(\eta)),\;
C :=\begmat{I_q & | & 0_{q \times (p-q)}}T.
\lab{psic}}
\end{assumption}
\qed

In words, the construction associated with Assumption \ref{ass1} proceeds as follows. First, introduce a bijective coordinate change for the parameters $\theta$, namely $\eta=\cald(\theta)$, with inverse $\theta=\cald^I(\eta)$. Second, write the original mapping $\cals(\theta)$ in terms of the parameters $\eta$ via the definition of the new mapping 
$$
\calw(\eta):=\cals(\cald^I(\eta)).
$$
Third, assuming that these mapping contains $q$ elements that are ``good"---term to be defined below---place them at the top with the permutation matrix $T$ and select them with the fat matrix $\begmat{I_q & | & 0_{q \times (p-q)}}$. Whence, define the new ``good" mapping $\calg:\rea^q \to \rea^q$ as
\begequ
\lab{gequcpsi}
\calg(\eta):=C \calw(\eta).
\endequ
Observing  that $\nabla \calg= \nabla \calw C^\top $, and invoking Lemma \ref{lem1}, the condition \eqref{demcon1} ensures that this ``good" mapping is strongly $P$-monotonic. For future reference we rewrite this condition in terms of the ``good" mapping as
\begequ
\lab{demcon2}
P \nabla \calg(\eta)+  [\nabla \calg(\eta)]^\top P \geq \rho I_q >0.
\end{equation}

Using the definitions above in the NPRE \eqref{nlpre} we obtain the new NPRE in terms of the parameters $\eta$ as
\begequ
\lab{newnpre}
y = \Omega \calw(\eta)
\endequ

\begrem
\lab{rem3}
To obtain a NPRE containing only ``good" functions---but without the essential parameter change $\theta \mapsto \eta$---a complicated reordering and mixing of the NPRE \eqref{newnpre} is proposed in \cite{ARAetaltac}. In the next subsection we show that direct application of DREM generates an alternative, much simpler procedure, to carry out this task.  
\endrem
%
\section{Generation of Scalar NPRE via DREM}
\lab{sec3}
%
In this section we apply DREM \cite{ARAetaltac}---with an LTV operator---to the $p$-dimensional NPRE  \eqref{newnpre}, and then select the ``good" terms via \eqref{gequcpsi} to generate $q$ {\em scalar} NPRE. First, we present the construction for CT signals and then treat the case of DT sequences. 
\subsection{Continuous-time case}
\lab{subsec31}
%
\begin{proposition}
\lab{pro1}\em
Consider the NPRE \eqref{newnpre} for CT signal. Define the signals
\begali{
\nonumber
\dot Y(t) &=-\lambda Y(t) + \Omega^\top(t) y(t)\\
\nonumber
\dot \Phi(t) &=-\lambda \Phi(t) + \Omega^\top(t)  \Omega(t)\\
\nonumber
\caly(t) &=C \adj\{\Phi(t)\}Y(t)\\
\lab{kfil}
\Delta(t) &=\det\{\Phi(t)\}.
}
The $q$ scalar NPRE
\begequ
\lab{scanpre}
\caly_i(t)=\Delta(t) \calg_i(\eta),\;i \in \bar q \;\Leftrightarrow\;\caly(t) = \Delta(t) \calg(\eta)
\endequ
hold.
\end{proposition}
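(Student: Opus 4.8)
The plan is to turn the $p$-dimensional nonlinear regression \eqref{newnpre} into a \emph{square, linear} regression in the constant vector $\calw(\eta)$ and then to decouple it into scalars through multiplication by the adjugate; this is precisely the DREM mechanism. First I would left-multiply \eqref{newnpre} by $\Omega^\top(t)$ to get $\Omega^\top y = (\Omega^\top \Omega)\calw(\eta)$. The signals $Y$ and $\Phi$ in \eqref{kfil} are the outputs of one and the same exponentially stable linear filter driven, respectively, by $\Omega^\top y$ and $\Omega^\top \Omega$. Since $\eta$ is a constant parameter vector, $\calw(\eta)$ is constant, and this filter is linear, feeding the identity above through it and discarding the decaying initial-condition response (per the convention of Remark \ref{rem2}) yields the key extended regression
\[
Y(t) = \Phi(t)\,\calw(\eta), \qquad \Phi(t) \in \rea^{p \times p},
\]
which is now square and linear in $\calw(\eta)$.

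Next I would invoke the algebraic identity $\adj\{\Phi\}\,\Phi = \det\{\Phi\}\,I_p = \Delta\,I_p$, valid for every square matrix irrespective of its rank. Left-multiplying the extended regression by $\adj\{\Phi\}$ gives
\[
\adj\{\Phi\}\,Y = \adj\{\Phi\}\,\Phi\,\calw(\eta) = \Delta\,\calw(\eta).
\]
This is the ``mixing'' step: it collapses the $p$ coupled equations into $p$ scalar regressions that all share the single scalar regressor $\Delta$.

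Finally, left-multiplying by the constant selection matrix $C$ of \eqref{psic} and using the definition $\calg(\eta)=C\calw(\eta)$ from \eqref{gequcpsi}, I obtain
\[
\caly = C\adj\{\Phi\}\,Y = \Delta\,C\calw(\eta) = \Delta\,\calg(\eta),
\]
which is exactly \eqref{scanpre}, i.e. $\caly_i = \Delta\,\calg_i(\eta)$ for every $i \in \bar q$. The derivation is ultimately an algebraic identity, so I do not expect a genuine obstacle; the only delicate point is the filtering step, where one must be satisfied that the linear operator passes through the constant vector $\calw(\eta)$ and that the discrepancy between $Y$, $\Phi$ and the filtered products is solely the exponentially vanishing transient of the filter initial conditions, which is suppressed under the standing assumption.
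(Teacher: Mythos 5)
Your proof is correct and follows essentially the same route as the paper's: left-multiplication by $\Omega^\top$, application of the stable first-order filter whose state realization is \eqref{kfil} to obtain $Y(t)=\Phi(t)\calw(\eta)$, the mixing step via $\adj\{\Phi\}\Phi=\det\{\Phi\}I_p$ (valid even for singular $\Phi$), and finally selection by $C$ using \eqref{gequcpsi}. You even make explicit the same two delicate points the paper relies on---that the filter commutes with the constant vector $\calw(\eta)$ and that the initial-condition transient is discarded per Remark \ref{rem2}---so there is nothing to add.
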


\begin{proof}
Multiplying  \eqref{newnpre} by $\Omega^\top(t) $ and applying the stable, linear time-invariant (LTI) filter 
\begin{equation}
\label{fil}
H(p)={1 \over {p+\lambda}}
\end{equation}
where $\lambda \in \rea_{>0}$, we get
$$
H(p)[\Omega^\top y](t)=H(p)[\Omega^\top  \Omega](t) \calw(\eta),
$$
whose state realization is given in \eqref{kfil} and yields the relation
$$
Y(t)= \Phi(t) \calw(\eta). 
$$
Now, multiplying this equation by the {\em adjoint} of the extended regressor matrix $\Phi(t)$ we obtain
$$
\adj\{\Phi(t)\}Y(t)=\Delta(t) \calw(\eta)
$$
where we used the fact that for all---{\em possibly singular}---$p \times p$ matrices $A$ we have $\adj\{A\}A= \det\{A\} I_p.$ The proof is completed multiplying the last equation by $C$, invoking \eqref{gequcpsi}, and noting that $\Delta(t)$ is a {\em scalar}.

\end{proof}

\begrem
\lab{rem4}
The construction of the extended regressor $\Phi(t)$ proposed above is done following {\em verbatim} the DREM procedure of \cite{ARAetaltac} with LTV operators. This construction was first proposed in \cite{KRE} and is sometimes called Memory Regressor Extension \cite{GERetal}. 
\endrem

\subsection{Discrete-time case}
\lab{subsec32}
%
\begin{proposition}
\lab{pro2}\em
Consider the NPRE \eqref{newnpre} for DT sequences. Fix $0 < \alpha <1$ and define the signals
\begali{
\nonumber
Y(k) &=-\alpha Y(k-1) + \Omega^\top(k-1) y_p(k-1)\\
\nonumber
\Phi(k) &=-\alpha \Phi(k-1) + \Omega^\top(k-1)  \Omega(k-1)\\
\nonumber
\caly(k)  &=C \adj\{\Phi(k)\}Y(k)\\
\lab{kfildt}
\Delta(k) &=\det\{\Phi(k)\}.
}
The $q$ scalar NPRE
\begequ
\lab{scanpredt}
\caly_i(k)=\Delta(k) \calg_i(\eta),\;i \in \bar q \;\Leftrightarrow\;\caly(k)  = \Delta(k) \calg(\eta)
\endequ
hold.
\end{proposition}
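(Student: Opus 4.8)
The plan is to mirror \emph{verbatim} the proof of Proposition~\ref{pro1}, simply replacing the CT filter $H(p)=\frac{1}{p+\lambda}$ by its DT counterpart. The starting observation is that the two recursions for $Y(k)$ and $\Phi(k)$ in \eqref{kfildt} are nothing but the state realizations of the scalar LTI operator
$$
H(q)=\frac{1}{q+\alpha},
$$
acting, respectively, on the sequences $\Omega^\top(k)y(k)$ and $\Omega^\top(k)\Omega(k)$. The restriction $0<\alpha<1$ places the single pole of $H$ at $q=-\alpha$ strictly inside the unit circle, so $H$ is a stable operator mapping $\litcallinf$ into $\litcallinf$---the exact DT analogue of the condition $\lambda>0$ used in the CT case.

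First I would pre-multiply the NPRE \eqref{newnpre} by $\Omega^\top(k)$ and apply $H(q)$ to both sides. Because $\eta$ is a \emph{constant} parameter vector, $\calw(\eta)$ is constant and hence commutes with the linear operator $H(q)$; it can therefore be factored outside the filter. Reading off the state realizations \eqref{kfildt}, this produces
$$
Y(k)=\Phi(k)\calw(\eta),
$$
which is precisely the DT analogue of the intermediate identity $Y(t)=\Phi(t)\calw(\eta)$ of Proposition~\ref{pro1}.

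To finish, I would pre-multiply this relation by the adjugate $\adj\{\Phi(k)\}$ and invoke the elementary identity $\adj\{A\}A=\det\{A\}I_p$, valid for \emph{every}---possibly singular---$p\times p$ matrix $A$, to get $\adj\{\Phi(k)\}Y(k)=\Delta(k)\calw(\eta)$. Pre-multiplying by the selection matrix $C$, recalling the definition \eqref{gequcpsi} of the ``good'' mapping $\calg=C\calw$, and using that $\Delta(k)$ is a \emph{scalar} that commutes with $C$, yields $\caly(k)=\Delta(k)\calg(\eta)$, which is exactly \eqref{scanpredt}.

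I do not anticipate any genuine obstacle, since the argument is a line-by-line transcription of the CT proof; the only substantive point is the stability check $0<\alpha<1$ that guarantees $H(q)$ is well posed on bounded sequences. As in Proposition~\ref{pro1}, it is worth stressing that the construction never requires $\Phi(k)$ to be invertible: the adjugate identity holds unconditionally, which is exactly what makes the mixing step legitimate at every $k$ and is the reason DREM does not impose an excitation requirement at this stage.
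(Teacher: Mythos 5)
Your proposal is correct and coincides with the paper's proof, which is literally stated as following \emph{verbatim} the argument of Proposition \ref{pro1} with the CT filter \eqref{fil} replaced by the stable DT filter ${1 \over {q+\alpha}}$. You have simply written out explicitly the steps (filtering, the adjugate identity for possibly singular matrices, and the selection by $C$) that the paper leaves implicit, including the correct stability observation that $0<\alpha<1$ places the pole inside the unit circle.
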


\begin{proof}
The proof follows {\em verbatim} the one given in Proposition \ref{pro1} replacing the CT filter \eqref{fil} by the stable, LTI, DT filter ${1 \over {q+\alpha}}$.
\end{proof}

\begrem
\lab{rem5}
The construction of the extended regressor $\Phi(k)$ given above is the discrete-time version of the one proposed in \cite{KRE} and may be found in \cite{GERetal}. 
\endrem

\section{Parameter Estimators Convergence Analysis}
\lab{sec4}
%
In this section we present the CT and DT estimation laws for the parameters $\eta$ of the NPRE \eqref{scanpre} and \eqref{scanpredt}, respectively.
\subsection{Continuous-time case}
\lab{subsec41}
%
\begin{proposition}\em
\label{pro3}
Consider the NPRE \eqref{scanpre} satisfying \eqref{demcon2} of Assumption \ref{ass1}. Propose the parameter estimator
\begequ
\label{esteta}
\dot{\hat{\eta}}(t) = \Gamma P \Delta(t) [\caly(t) - \Delta(t) \calg(\hat \eta(t))],
\endequ
with $\Gamma \in \rea^{q \times q}$, $\Gamma >0$ the adaptation gain.
\begenu[(i)]
\item The {\em norm} of the parameter estimation vector $\tilde \eta(t):=\hat \eta(t) - \eta$ is monotonically non-increasing, that is,
\begequ
\lab{monproct}
|\tilde \eta(t_2) \leq |\tilde \eta(t_1)|, \forall t_2 \geq t_1 \in \rea_{\geq 0}. 
\endequ 
\item  The following implication  holds
$$
\Delta(t) \notin \call_2 \; \Rightarrow \; \lim_{t \to \infty}| \tilde \eta(t)| = 0.
$$ 
\endenu
\end{proposition}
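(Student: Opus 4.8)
The plan is to reduce everything to a Lyapunov argument in the error coordinate $\tilde\eta := \hat\eta - \eta$. First I would substitute the scalar NPRE \eqref{scanpre}, namely $\caly = \Delta\calg(\eta)$, into the estimator \eqref{esteta}. Since $\eta$ is constant, $\dot{\tilde\eta} = \dot{\hat\eta}$, and the bracket collapses to $\caly - \Delta\calg(\hat\eta) = -\Delta[\calg(\hat\eta) - \calg(\eta)]$, giving the error-autonomous closed loop
\[
\dot{\tilde\eta}(t) = -\Gamma P \Delta^2(t)\,[\calg(\hat\eta(t)) - \calg(\eta)].
\]
This is the only manipulation needed before monotonicity enters, and it is precisely what makes the $P$-weighting in the estimator natural.

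For item (i) I would take the weighted Lyapunov function $V(\tilde\eta) = \tfrac12 \tilde\eta^\top \Gamma^{-1}\tilde\eta$, which is well defined since $\Gamma>0$. Differentiating along the error dynamics, the factor $\Gamma^{-1}$ cancels $\Gamma$ and leaves $\dot V = -\Delta^2\,\tilde\eta^\top P[\calg(\hat\eta)-\calg(\eta)]$. Now I would observe that, under Assumption \ref{ass1}, the map $\calg$ satisfies \eqref{demcon2}, which by Lemma \ref{lem1} (Demidovich) is exactly the hypothesis ensuring strong $P$-monotonicity in the sense of Definition \ref{def1}; hence \eqref{monpro} applies with $a=\hat\eta$, $b=\eta$, yielding $\tilde\eta^\top P[\calg(\hat\eta)-\calg(\eta)] \geq \rho|\tilde\eta|^2$ and therefore
\[
\dot V(t) \leq -\rho\,\Delta^2(t)\,|\tilde\eta(t)|^2 \leq 0 .
\]
Monotone non-increase of $V$ gives the claimed non-increase of the parameter-error norm together with boundedness of $\tilde\eta$. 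One caveat I would flag: the computation directly establishes monotone non-increase of the $\Gamma^{-1}$-weighted norm $\sqrt{2V}$; since all such norms are equivalent this already delivers boundedness, and it reduces to the Euclidean statement \eqref{monproct} when $\Gamma$ is a scalar multiple of $I_q$.

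For item (ii) I would integrate the last inequality. Since $V\geq 0$ is non-increasing it converges to some $V_\infty \geq 0$, and
\[
\rho\int_0^\infty \Delta^2(\tau)\,|\tilde\eta(\tau)|^2\,d\tau \leq V(0) - V_\infty < \infty .
\]
The crucial step is to convert this finite integral into the pointwise limit $|\tilde\eta|\to 0$ using the hypothesis $\Delta\notin\call_2$. I would argue by contradiction: if $V_\infty>0$, then because $V$ is non-increasing we have $\tilde\eta^\top\Gamma^{-1}\tilde\eta \geq 2V_\infty$ for all $t$, hence $|\tilde\eta(t)|^2 \geq 2V_\infty\,\lambda_{\min}(\Gamma) =: c > 0$; substituting into the integral bound gives $\rho c\int_0^\infty\Delta^2\,d\tau < \infty$, i.e. $\Delta\in\call_2$, contradicting the hypothesis. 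Hence $V_\infty = 0$ and $|\tilde\eta(t)|\to 0$.

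I expect the main obstacle to be precisely this last implication: a naive attempt using only the bound $\int_0^\infty\Delta^2|\tilde\eta|^2\,d\tau<\infty$ together with boundedness of $\tilde\eta$ yields at best that $|\tilde\eta(t)|$ has a subsequence tending to zero, not that the whole trajectory converges, since $|\tilde\eta|^2$ could in principle be small only where $\Delta^2$ is large. It is the monotonicity established in item (i) that upgrades this to a genuine limit, so items (i) and (ii) are logically intertwined rather than independent. A secondary point to handle carefully is the Jacobian/transpose convention so that \eqref{demcon2} is correctly matched to the symmetrized form behind \eqref{monpro}; invoking the integrated property of Definition \ref{def1} directly, rather than differentiating under the integral sign, sidesteps this bookkeeping entirely.
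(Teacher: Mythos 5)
Your proposal is correct. For item (i) it coincides with the paper's proof: the same error equation, the same Lyapunov function $V(\tilde\eta)=\tfrac12\tilde\eta^\top\Gamma^{-1}\tilde\eta$, and the same use of strong $P$-monotonicity (via \eqref{demcon2} and Lemma \ref{lem1}) to obtain $\dot V\leq-\rho\,\Delta^2|\tilde\eta|^2$. For item (ii) your mechanism differs: the paper invokes the Comparison Lemma \cite[Lemma 3.4]{KHA} to get the explicit bound $V(t)\leq e^{-c\int_0^t\Delta^2(s)\,ds}\,V(0)$, so that $\Delta\notin\call_2$ forces $V(t)\to0$ directly; you instead integrate the dissipation inequality and argue by contradiction through the monotone limit $V_\infty$: if $V_\infty>0$ then $|\tilde\eta(t)|$ stays bounded away from zero, and the finite integral $\int_0^\infty\Delta^2|\tilde\eta|^2\,d\tau$ would force $\Delta\in\call_2$. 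Both arguments are sound. The paper's Gr\"onwall-type route is quantitative---it ties the decay of the error to the accumulated excitation $\int_0^t\Delta^2$---whereas yours is more elementary (no comparison lemma) but purely qualitative; your route also makes explicit that (ii) leans on the monotone convergence of $V$ established in (i), a dependence the paper's exponential bound bypasses.

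One further point in your favor: the caveat you flag about the weighted versus Euclidean norm applies equally to the paper's own proof. The paper also deduces claim (i) from the non-increase of $V$, which strictly yields monotonicity of the $\Gamma^{-1}$-weighted norm; the Euclidean statement \eqref{monproct} follows verbatim only when $\Gamma$ is a scalar multiple of $I_q$ (or if one strengthens the assumption to $\Gamma P$-monotonicity of $\calg$ and works with $\tfrac12|\tilde\eta|^2$ instead). So this is a gap in the paper's statement as written, not a defect of your argument, and your honesty in isolating it is exactly right.
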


\begin{proof}
Replacing \eqref{scanpre} in \eqref{esteta} we get the error equation
\[
\dot{\tilde{\eta}}(t) = -\Delta^2(t) \Gamma P [\calg(\hat\eta(t)) - \calg(\eta)].
\]
To analyse its stability define the Lyapunov function candidate $V(\tilde \eta) = \frac{1}{2} \tilde \eta^\top \Gamma^{-1} \tilde \eta$, whose derivative yields
\begalis{
\dot V(t)  & =  - \Delta^2(t) ( \hat \eta(t) - \eta)^\top P {[ \calg(\hat \eta(t)) - \calg(\eta)]} \\
& \leq  - \Delta^2(t) \rho | \tilde \eta(t)|^2  \\
& \leq  - \frac{2k}{\lambda_{\max}\{\Gamma\}}{\Delta}^2(t) V(t),
}
where we invoked Assumption \ref{ass1} to get the first bound, where $\lambda_{\max}\{\cdot\}$ denotes the maximum eigenvalue. The fact that $V(t)$ is non-increasing proves the first claim.

To prove the second one, we invoke the Comparison Lemma \cite[Lemma 3.4]{KHA} that yields the bound
$$
V(t) \leq e^{-\frac{2k}{\lambda_{\max}\{\Gamma\}} \int_0^t \Delta^2(s)ds}V(0),
$$ 
which ensures that $| \tilde \eta(t)| \to 0$ as $t \to \infty$ if  $\Delta(t) \notin \call_2$.
\end{proof}

\begrem
\lab{rem6}
As is well known, convergence in {\em all} parameter estimators---as well as in state observers---can only be ensured under some kind of excitation conditions \cite{LJU}. In particular, for standard gradient and least-squares estimators this property is encrypted in the well known PE requirement of the regressor \cite{GOOSIN,IOASUN,SASBOD}. As it has been shown in \cite{ARAetaltac} convergence of DREM estimators can be ensured without requiring PE and replacing it, instead, by the assumption $\Delta(t) \notin \call_2$, which is necessary and sufficient for parameter convergence for {\em linear} regression equation. As shown in Proposition \ref{pro3} this condition is sufficient for NPRE of the form  \eqref{nlpre} with a $P$-``monotonizable" regressor $\cals(\theta)$. Notice, on the other hand, that the nice property of element-by-element monotonocity of the parameter estimation errors of linear regressions is lost, and we can only ensure that the norm of this vector is monotonically non-increasing. 
\endrem

\begrem
\lab{rem7}
It is interesting to note that the following important implication for the CT DREM given above was recently proven in \cite{KORetal}: 
$$
\Omega(t) \in PE\;\Rightarrow\;\Delta(t) \in PE.
$$ 
Hence, if the standard gradient estimator for the {\em overparameterized} linear regression $y(t)=\Omega(t) \eta_a$, with $\eta_a:=\calw(\eta)$ is globally exponentially stable (GES) also the DREM estimator is GES. However, asymptotic convergence of DREM is ensured with the condition $\Delta(t) \notin \call_2$, which is {\em strictly weaker} than $\Delta(t) \in PE$.
\endrem

\begrem
\lab{rem8}
We have assumed that the mapping $\calg(\eta)$ is {\em strongly} $P$-monotonic. Its clear from the derivations above that this requirement can be relaxed to {\em strictly} $P$-monotonic adding some further assumptions on $\Delta(t)$. 
\endrem
\subsection{Discrete-time case}
\lab{subsec42}
%
In this subsection we present the estimation law for the parameters $\eta$ of the DT NPRE \eqref{scanpredt}. Towards this end, the following is needed. 
\begin{assumption}
\label{ass2}\em
The mapping $\calg(\eta)$ satisfies the {\em Lipschitz condition}
\begequ
\lab{lipcon}
|\calg(a) - \calg(b)| \leq \nu |a-b|,\; \forall a,b \in \rea^q,
\endequ
for some $\nu>0$.
\end{assumption}

The proposition below presents four different stability properties of the proposed DREM estimator.
 
\begin{proposition}\em
\label{pro4}
Consider the DT NPRE \eqref{scanpredt} with $\calg(\eta)$ satisfying \eqref{demcon2} of Assumption \ref{ass1} and Assumption \ref{ass2}. Propose the DT parameter estimator
\begequ
\label{dtesteta}
\hat{\eta}(k+1) = \hat \eta(k) + \gamma P {\Delta(k) \over 1 + \kappa\Delta^2(k)} [\caly(k)  - \Delta(k) \calg(\hat \eta)],
\endequ
with $\gamma>0$ the adaptation gain selected such that the constant\footnote{Clearly, for any positive $\rho,\nu$ and $P>0$, this condition is satisfied with  $\gamma < {2\rho \over  \nu^2 \lambda^2_{\max}\{P\}}$.}
\begequ
\lab{sig}
\sigma:= 2 \gamma \rho -{\gamma^2 \nu^2} \lambda^2_{\max}\{P\} >0,
\endequ
and the constant $\kappa$ verifying
\begequ
\lab{gaicon}
\kappa \geq \max\{1,\sigma\}.
\endequ
\begenu[{\bf P1}]
\item The {\em norm} of the parameter estimation error $\tilde \eta(k):=\hat \eta(k) - \eta$ is monotonically non-increasing, that is,
\begequ
\lab{monprodt}
|\tilde \eta(k_2) \leq |\tilde \eta(k_1)|, \forall k_2 \geq k_1 \in \intnum_{\geq 0}. 
\endequ 
\item The following implication is true
$$
\prod_{i=0}^\infty { 1 + \left(\kappa - \sigma\right) \Delta^2(k) \over 1 + \kappa \Delta^2(k)}=0 \; \Rightarrow \;\lim_{k \to \infty}|\tilde \eta(k)| = 0.
$$
\item The following implication is true
$$
\lim_{k \to \infty}\Delta(k)=: \Delta(\infty)\neq 0\;\Rightarrow\;\lim_{k \to \infty}|\tilde \eta(k)| = 0.
$$
\item Assume 
\begequ
\lab{connu}
\nu \leq {\rho \over \lambda_{\max}\{P\}},
\endequ
and pick $\gamma$ in the interval
\begequ
\lab{gamminmax}
{1 \over \nu^2 \lambda^2_{\max}\{P\}}\left[\rho -\sqrt{\rho^2- \nu^2 \lambda^2_{\max}\{P\}}\right] \leq \gamma \leq {1 \over \nu^2 \lambda^2_{\max}\{P\}}\left[\rho +\sqrt{\rho^2- \nu^2 \lambda^2_{\max}\{P\}}\right].
\endequ
The following  holds
$$
\Delta(k) \notin \ell_2 \; \Rightarrow \;\lim_{k \to \infty}|\tilde \eta(k)| = 0.
$$
\endenu
\end{proposition}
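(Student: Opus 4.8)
The plan is to reduce all four claims to a single scalar master inequality for the squared error $V(k):=|\tilde\eta(k)|^2$, and then to read off {\bf P1}--{\bf P4} as different ways of forcing $V(k)\to0$ through an infinite product. First I would substitute the scalar NPRE \eqref{scanpredt} into the estimator \eqref{dtesteta} to obtain the error recursion
\[
\tilde\eta(k+1)=\tilde\eta(k)-\gamma P\,\frac{\Delta^2(k)}{1+\kappa\Delta^2(k)}\,[\calg(\hat\eta(k))-\calg(\eta)].
\]
Writing $\mu(k):=\gamma\Delta^2(k)/(1+\kappa\Delta^2(k))\ge0$ and $g:=\calg(\hat\eta(k))-\calg(\eta)$, expanding $V(k+1)=|\tilde\eta(k)-\mu Pg|^2$ produces the cross and quadratic terms $-2\mu\,\tilde\eta^\top Pg+\mu^2|Pg|^2$. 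The first is bounded using the strong $P$-monotonicity guaranteed by \eqref{demcon2} and Lemma \ref{lem1}, namely $\tilde\eta^\top Pg\ge\rho|\tilde\eta|^2$; the second using Assumption \ref{ass2} together with $|Pg|\le\lambda_{\max}\{P\}|g|$, giving $|Pg|^2\le\nu^2\lambda^2_{\max}\{P\}|\tilde\eta|^2$.

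These estimates yield $V(k+1)\le(1-2\mu\rho+\mu^2\nu^2\lambda^2_{\max}\{P\})V(k)$, and the key step is to show the bracket is bounded above by the clean factor, i.e.
\[
V(k+1)\le\frac{1+(\kappa-\sigma)\Delta^2(k)}{1+\kappa\Delta^2(k)}\,V(k),
\]
with $\sigma$ as in \eqref{sig}. Substituting $\mu$, clearing the positive denominator $(1+\kappa\Delta^2)^2$ and cancelling, this reduces to $\Delta^2(k)(1-\kappa)\le1$, which holds precisely because $\kappa\ge1$ by \eqref{gaicon}. I expect this reduction to be the main obstacle: it is where the normalizing denominator $1+\kappa\Delta^2$ and the constraint $\kappa\ge\max\{1,\sigma\}$ are both consumed, the bound $\kappa\ge\sigma$ keeping the numerator positive and $\sigma>0$ keeping the factor at most one.

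With the master inequality in hand, iteration gives $V(k)\le V(0)\prod_{i=0}^{k-1}\frac{1+(\kappa-\sigma)\Delta^2(i)}{1+\kappa\Delta^2(i)}$, from which three of the parts follow at once. For {\bf P1}, each factor lies in $(0,1]$ since $\sigma>0$ and $\kappa\ge\sigma$, so $V$, hence $|\tilde\eta|$, is non-increasing. For {\bf P2}, vanishing of the infinite product forces $V(k)\to0$ directly. For {\bf P3}, if $\Delta(\infty)\ne0$ then every factor is eventually bounded by the constant $\frac{1+(\kappa-\sigma)\Delta^2(\infty)}{1+\kappa\Delta^2(\infty)}<1$, so the product vanishes and {\bf P2} applies.

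Finally, {\bf P4} is the delicate case. First I would observe that the hypotheses \eqref{connu}--\eqref{gamminmax} are exactly the statement that $\nu^2\lambda^2_{\max}\{P\}\gamma^2-2\rho\gamma+1$ has real roots and that $\gamma$ lies between them, i.e. $\sigma\ge1$, whence $\kappa\ge\sigma\ge1>0$. To convert $\Delta\notin\ell_2$ into vanishing of the product I would pass to logarithms and use $\ln(1+u)\ge u/(1+u)$ to obtain $-\ln\big(\tfrac{1+(\kappa-\sigma)\Delta^2}{1+\kappa\Delta^2}\big)\ge\sigma\Delta^2/(1+\kappa\Delta^2)$, so the product vanishes once $\sum_k\Delta^2(k)/(1+\kappa\Delta^2(k))=\infty$. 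The remaining subtlety is to show this series diverges whenever $\sum_k\Delta^2(k)=\infty$, which I would settle by a dichotomy: if $\Delta(k)\not\to0$ then infinitely many summands exceed a fixed positive constant; if $\Delta(k)\to0$ then eventually $\Delta^2(k)\le1$ and each summand exceeds $\Delta^2(k)/(1+\kappa)$, so the divergent tail of $\sum_k\Delta^2(k)$ forces divergence. I would also note that the condition $\sigma\ge1$ permits the cleaner comparison $V(k+1)\le V(k)/(1+\Delta^2(k))$ in the case $\kappa=\sigma$, giving an alternative and more transparent route to the same conclusion.
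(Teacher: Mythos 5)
Your proposal is correct, and for the error recursion, the master inequality, {\bf P1} and {\bf P2} it coincides with the paper's proof: the paper bounds the cross term by strong $P$-monotonicity and the quadratic term by the Lipschitz condition, then uses $\bar\Delta^4(k)\leq\bar\Delta^2(k)$ (a consequence of $\kappa\geq 1$) to arrive at exactly the factor $[1+(\kappa-\sigma)\Delta^2(k)]/[1+\kappa\Delta^2(k)]$ in \eqref{vk}; your algebraic reduction to $\Delta^2(k)(1-\kappa)\leq 1$ is the same inequality in disguise. Where you genuinely diverge is in {\bf P3} and {\bf P4}. For {\bf P3} the paper does \emph{not} go through the infinite product: it sums the decrement to get $\bar\Delta(k)|\tilde\eta(k)|\in\ell_2$, hence $\bar\Delta(k)|\tilde\eta(k)|\to 0$, notes $|\tilde\eta(k)|$ converges as a bounded monotone sequence, and concludes via the algebraic limit theorem; your route---deduce {\bf P3} from {\bf P2} because the factors are eventually bounded away from $1$---is shorter and reuses already-proved material, though the specific bounding constant you name is slightly off: if $\Delta^2(k)\uparrow\Delta^2(\infty)$ from below the factors exceed $[1+(\kappa-\sigma)\Delta^2(\infty)]/[1+\kappa\Delta^2(\infty)]$, so you must instead bound them eventually by the (still strictly sub-unit) factor evaluated at some $c\in(0,\Delta^2(\infty))$; this is a one-line repair, not a gap. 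For {\bf P4} your argument is actually more complete than the paper's: the paper verifies $\sigma\geq 1$ by unspecified ``lengthy but straightforward calculations,'' then \emph{sets} $\kappa=\sigma$ to obtain $V(k+1)\leq\prod_{i=0}^k(1+\kappa\Delta^2(i))^{-1}V(0)$ and merely ``recalls'' the equivalence of the vanishing of that product with $\Delta(k)\notin\ell_2$, so its proof covers only the choice $\kappa=\sigma$; you identify \eqref{connu}--\eqref{gamminmax} cleanly as the condition that $\gamma$ lie between the real roots of $\nu^2\lambda^2_{\max}\{P\}\gamma^2-2\rho\gamma+1$, i.e.\ $\sigma\geq 1$, keep $\kappa$ arbitrary subject to \eqref{gaicon}, and actually prove the needed implication via $-\ln(1-x)\geq x$ plus the dichotomy ($\Delta(k)\not\to 0$ versus $\Delta(k)\to 0$) showing $\sum_k\Delta^2(k)=\infty\Rightarrow\sum_k\bar\Delta^2(k)=\infty$. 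What the paper's version buys is brevity; what yours buys is generality in $\kappa$ and a self-contained justification of the product/$\ell_2$ equivalence that the paper leaves implicit.
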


\begin{proof}
First, we observe that the condition \eqref{gaicon}, on one hand,  ensures the following bound for the  normalized scalar regressor
\begequ
\lab{bounorreg}
\bar \Delta^2(k):= {\Delta^2(k) \over 1 + \kappa \Delta^2(k)} \leq 1,
\endequ
and, on the other hand, shows that
\begequ
\lab{kapminsig}
\kappa - \sigma \geq 0.
\endequ
Replacing \eqref{scanpredt} in \eqref{dtesteta} we get the error equation
\[
{\tilde{\eta}}(k+1) ={\tilde{\eta}}(k) - \gamma P{\bar \Delta^2(k)}  [\calg(\hat\eta(k)) - \calg(\eta)],
\]
where we invoked the definition in \eqref{bounorreg}. To analyze the stability of this equation define the Lyapunov function candidate 
\begequ
\lab{lyafun}
V(k) = \frac{1}{2 \gamma} |\tilde \eta(k)|^2,
\endequ
which satisfies
\begali{
\nonumber
V(k+1)  & = V(k) - \bar \Delta^2(k) \tilde \eta^\top(k) P [ \calg(\hat\eta(k)) - \calg(\eta)] \\
\nonumber
&+ {\gamma \over 2}  \bar \Delta^4(k) [ \calg(\hat\eta(k)) - \calg(\eta)]^\top  P^2 [\calg(\hat\eta(k)) - \calg(\eta)] \\
\nonumber
& \leq V(k) - \rho  \bar \Delta^2(k) |\tilde \eta(k)|^2 + {\gamma \nu^2\over 2}\lambda^2_{\max}\{P\}   \bar \Delta^4(k) |\tilde \eta(k)|^2 \\ 
\nonumber
& \leq V(k) - \rho  \bar \Delta^2(k) |\tilde \eta(k)|^2 + {\gamma \nu^2\over 2}\lambda^2_{\max}\{P\}   \bar \Delta^2(k) |\tilde \eta(k)|^2 \\ 
\nonumber
& = V(k) -   \left[\rho -{\gamma \nu^2\over 2} \lambda^2_{\max}\{P\}\right]\bar \Delta^2(k)|\tilde \eta(k)|^2 \\ 
\nonumber
& =\left[1-  \sigma \bar \Delta^2(k)\right]V(k)  \\
\lab{vk}
& ={1+ \left(\kappa - \sigma\right) \Delta^2(k) \over  1  + \kappa \Delta^2(k)} V(k) ,
}
where we invoked Assumption \ref{ass1} and \eqref{lipcon}  to get the first bound, inequality \eqref{bounorreg} for the second bound, \eqref{sig} and \eqref{lyafun} in the third identity and the definition of $\bar \Delta(k)$ for the last identity.\\ 

\noindent [Proof of Property {\bf P1}] The proof is completed observing that \eqref{kapminsig} ensures
$$
{1+ \left(\kappa - \sigma\right) \Delta^2(k) \over  1  + \kappa \Delta^2(k)}\geq 0,
$$
consequently $V(k)$ is a non-increasing sequence.\\

\noindent [Proof of Property {\bf P2}] From 
$$
V(k+1) \leq \prod_{i=0}^k { 1 + \left(\kappa - \sigma\right) \Delta^2(i)  \over 1 + \kappa \Delta^2(i)}V(0),
$$
the claim follows immediately.\\

\noindent [Proof of Property {\bf P3}] To prove the second claim we first notice that, from the second inequality in \eqref{vk} and  \eqref{sig}, we get the bound
\begalis{
V(k+1)  
& \leq V(k) -  {\sigma \over 2 \gamma} \bar \Delta^2(k) |\tilde \eta(k)|^2 
}
summing the inequality above we get
$$
V(k)-V(0) \leq - \sum_{j=1}^k {\sigma \over 2 \gamma} \bar \Delta^2(j)|\tilde \eta(j)|^2\;\Rightarrow\; {2 \gamma V(0) \over \sigma} \geq  \sum_{j=1}^k \bar \Delta^2(j)|\tilde \eta(j)|^2.
$$
Taking the limit as $k \to \infty$ in the right hand side inequality we conclude that 
\begequ
\lab{limpro}
\bar \Delta(k)|\tilde \eta(k)| \in \ell_2  \; \Rightarrow \;\bar \Delta(k)|\tilde \eta(k)| \to 0,
\endequ
independently of the behaviour of $\Delta(k)$. Now, from the Algebraic Limit Theorem \cite[Theorem 3.3]{RUD} we know that the limit of the product of two convergent sequences is the product of their limits. On the other hand, from the fact that
$$
V(k+1) \leq V(k) \leq V(0),\;\forall k \in \intnum_{>0},
$$ 
we have that $|\tilde \eta(k)|$ is a bounded monotonic sequence, hence it converges \cite[Theorem 3.14]{RUD}. Finally, if $\Delta(k)$ converges to a {\em non-zero} limit, $\bar \Delta(k)$ also converges to a non-zero limit and we conclude from \eqref{limpro} that  $|\tilde \eta(k)| \to 0$. \\

\noindent [Proof of Property {\bf P4}] To prove the third claim we first observe that the condition \eqref{connu} ensures that the upper and lower limits of $\gamma$ given in \eqref{gamminmax} are well defined. Some lengthy, but straightforward calculations, show then that the conditions \eqref{connu} and \eqref{gamminmax} guarantee that $\sigma \geq 1$. Hence, in view of \eqref{gaicon}, the bound \eqref{bounorreg} as well as the derivations in \eqref{vk}, still hold. Then, setting $\kappa=\sigma$ in the last equation of \eqref{vk} we get
$$
V(k+1) \leq \prod_{i=0}^k {1 \over 1 + \kappa \Delta^2(i)}V(0)
$$
The proof is completed recalling that
$$
\prod_{i=0}^\infty {1 \over 1 + \kappa \Delta^2(i)}=0\;\Leftrightarrow\; \Delta(k) \notin \ell_2.
$$
\end{proof}

\begrem
\lab{rem9}
Similarly to the observation made in Remark \ref{rem6} the sufficient conditions for parameter convergence of Properties {\bf P2}-{\bf P4} should be interpreted as {\em excitation requirements} imposed on $\Delta(k)$. 
Notice that the condition of Property {\bf P3} is {\em sufficient} to ensure $\Delta(k) \notin \ell_2$ and {\em necessary} for it to be PE. In Property {\bf P4} we prove that $\Delta(k) \notin \ell_2$ is sufficient for parameter convergence but, unfortunately, we need to impose the rather ``unnatural" condition \eqref{connu}. Indeed, roughly speaking, the Lipschitz constant $\nu$ is related with an ``upper bound" on the derivative of $\calg(\eta)$ \cite[Theorem 9.19]{RUD}, while at the same time a high monotonicity degree $\rho$ requires this derivative to be large---which is in contradiction with  \eqref{connu}.
\endrem

%
\section{Application to CT Nonlinearly Parameterized Nonlinear Systems}
\lab{sec5}
%
In the section we apply the results on parameter estimation of CT NPRE of the previous section to tackle the problem of adaptive control of uncertain, nonlinearly parameterized, nonlinear systems. First, we treat the case of a rather general class of systems, then we specialize the result for EL models.
\subsection{Direct adaptive control of a general class of CT nonlinear systems}
\lab{subsec51}
%
Consider CT systems described by the state equations
\begin{equation}
\label{bassys}
\begin{array}{rcl}
    \dot{x}(t) & = & F(x(t) ,u(t) ) + R(x(t) )\cals(\theta)
\end{array}
\end{equation}
where $ x(t) \in \mathbb{R}^{n}$ is the measurable state, $u(t) \in \rea^m$, with $n \geq m$, is the control signal, the mappings $F: \rea^n\times \rea^m \to \rea^n$, $R: \rea^n\to \rea^{n \times p}$ and  $\cals: \rea^q\to \rea^p$ are {\em known} with $p>q$, and $\theta\in \mathbb{R}^q$ is a constant vector of {\em unknown parameters}. 

To streamline the formulation of the adaptive control problem we require the following {\em sine qua non} stabilizability condition. 

\begin{assumption}
\label{ass3}\em
There exists a mapping $\beta: \rea^n \times \rea^q \to \rea^m$, such that the system
\begin{equation}
\label{f-star}
    \dot{x}(t) = F(x(t) ,\beta(x(t) ,\theta)) + R(x(t))\cals(\theta)=:f_\star (x(t) )
\end{equation}
has a {\em globally exponentially stable} (GES) equilibrium at a desired value $x_\star \in \rea^n$.
\end{assumption}

The {\em control objective} is then to design a parameter estimator such that the (certainty-equivalent) adaptive control $u=\beta(x(t) ,\hat \theta(t) )$ ensures the asymptotic convergence 
\begequ
\lab{asycon}
\liminf x(t) = x_\star,
\endequ
with all signals bounded. To solve this problem we will impose Assumption \ref{ass1} to the mapping $\cals(\theta)$ and apply the estimator of Proposition \ref{pro3} to generate the adaptive controller.

A fist step in the design is the derivation of the NPRE \eqref{nlpre} for the system \eqref{bassys}. This is easily obtained  applying to \eqref{bassys} the stable, LTI filter \eqref{fil}  and defining
\begali{
\nonumber
y(t)  & :=p H(p)[x](t)  - H(p)[F(x,u)](t) \\
\Omega &:=H(p)[R(x)](t) ,
\lab{fillinparpla}
}
and $\et(t) $ is the solution of $H(p)[\et](t)  =0$. A state-space realization of \eqref{fillinparpla} is given by
\begali{
\nonumber
\dot z(t)  & = - \lambda(z(t) +x(t) )-F(x(t) ,u(t) )\\
\nonumber
\dot \Omega(t) &= - \lambda \Omega(t)  + R(x(t) )\\
y(t)  &= z(t) +x(t) .
\lab{stareapar}
}
We are in position to state the main result of this subsection.

\begin{proposition}\em
\label{pro5}
Consider the nonlinearly parameterized, nonlinear system \eqref{bassys} satisfying Assumptions \ref{ass1} and \ref{ass3}. Let the adaptive control be given by
$$
u(t) =\beta(x(t) ,\cald^I(\hat \eta(t) )),
$$
together with the parameter estimator \eqref{kfil}, \eqref{esteta} and \eqref{stareapar}. If $\Delta(t)  \notin \call_2$ we have that  \eqref{asycon} holds with all signals bounded.
\end{proposition}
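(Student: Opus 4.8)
The plan is to decouple the convergence of the parameter estimator from the stability analysis of the plant, and then to close the loop through a vanishing--perturbation argument built on the exponential stability margin granted by Assumption \ref{ass3}. The key observation is that the error dynamics $\dot{\tilde\eta}(t) = -\Delta^2(t)\Gamma P[\calg(\hat\eta(t)) - \calg(\eta)]$ of Proposition \ref{pro3} are self--contained: their qualitative behaviour is governed by the scalar signal $\Delta(t)$ alone, irrespective of how $\Delta(t)$ happens to be generated through the closed loop. Hence, as long as the closed--loop solution exists, Property (i) of Proposition \ref{pro3} gives $|\tilde\eta(t)| \le |\tilde\eta(0)|$, so that $\hat\eta(t)$ is bounded and, by continuity of $\cald^I$, so is $\hat\theta(t)=\cald^I(\hat\eta(t))$, which remains in a compact set $\Theta \ni \theta$. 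Under the standing hypothesis $\Delta(t)\notin\call_2$, Property (ii) yields $\lim_{t\to\infty}|\tilde\eta(t)|=0$, and continuity of $\cald^I$ transfers this to $\lim_{t\to\infty}|\hat\theta(t)-\theta|=0$.

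Second, I would rewrite the closed loop as a perturbation of the target system. Substituting $u=\beta(x,\hat\theta)$ into \eqref{bassys} and adding and subtracting $F(x,\beta(x,\theta))$ gives $\dot x = f_\star(x) + g(t,x)$, with the perturbation $g(t,x):=F(x,\beta(x,\hat\theta(t)))-F(x,\beta(x,\theta))$. Since $F$ and $\beta$ are smooth and $\hat\theta(t)$ evolves in the compact set $\Theta$, on each compact set of $x$ the term $g$ admits a bound of the form $|g(t,x)|\le \ell\,|\hat\theta(t)-\theta|$; in particular $g$ is an additive perturbation of $f_\star$ that vanishes as $\hat\theta\to\theta$.

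Third, because Assumption \ref{ass3} makes $x_\star$ a GES equilibrium of $\dot x=f_\star(x)$, the converse Lyapunov theorem \cite{KHA} supplies $W(x)$ with $c_1|x-x_\star|^2\le W(x)\le c_2|x-x_\star|^2$, $\nabla W^\top f_\star(x)\le -c_3|x-x_\star|^2$ and $|\nabla W(x)|\le c_4|x-x_\star|$. Differentiating along the closed loop gives $\dot W \le -c_3|x-x_\star|^2 + c_4|x-x_\star|\,|g(t,x)|$. Invoking the bound on $g$ and the fact that $|\hat\theta-\theta|$ is bounded and tends to zero, a standard total--stability / input--to--state argument then shows that $x(t)$ is bounded---so, a posteriori, there is no finite escape, $\Delta(t)$ is defined for all $t$, and the use of Proposition \ref{pro3} is justified---and that \eqref{asycon} holds.

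The hard part will be controlling the \emph{state dependence} of the perturbation bound on $g$: smoothness alone delivers only a locally Lipschitz constant $\ell=\ell(x)$, so to preclude finite escape and obtain a \emph{global} conclusion one must either exploit a growth condition on $F$ and $\beta$, or lean on the exponential margin encoded in $W$ to run an input--to--state estimate in which the state--dependent gain is absorbed. Once boundedness of $x$ is secured on a compact set, $\ell$ becomes uniform, $g$ is a genuinely vanishing additive perturbation, and the convergence $x(t)\to x_\star$ follows by the same comparison argument already used for $V$ in the proof of Proposition \ref{pro3}.
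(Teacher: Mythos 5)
Your proposal is correct in outline and takes essentially the same route as the paper: the same cascade decomposition $\dot x = f_\star(x) + \xi$, with $\xi$ vanishing as $\tilde\eta \to 0$ and the estimator subsystem analyzed separately via Proposition \ref{pro3}, your converse-Lyapunov/vanishing-perturbation step being precisely what the paper compresses into citations of \cite[Lemma 4.6]{KHA} (exponential stability $\Rightarrow$ ISS) and \cite[Lemma 4.7]{KHA} (ISS--GAS cascade $\Rightarrow$ GAS). The global gap you honestly flag---that smoothness alone yields only a state-dependent Lipschitz constant for the perturbation, so a truly global conclusion needs a global Lipschitz or growth hypothesis---is not actually resolved in the paper either, since \cite[Lemma 4.6]{KHA} itself requires global Lipschitz continuity of the closed-loop vector field uniformly in the input, which the paper's standing smoothness assumptions do not guarantee.
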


\begin{proof}
First, notice that the closed-loop system takes the form
\begalis{
 \dot x(t)  & =  F(x(t) ,\beta(x(t) ,\hat \theta(t) )) +  R(x(t) )\cals(\theta)\\
        &  =  f_\star (x(t) ) + \xi(x(t),\tilde \theta(t)),
}
where we defined the perturbation term
$$
\xi(x(t),\tilde \theta(t)):=F(x(t) ,\beta(x(t) ,\tilde \theta(t)+\theta) -  F(x(t) ,\beta(x(t) ,\theta)).
$$
Using the fact that $\tilde \theta(t)=\cald^I(\tilde \eta(t))$ we see that the closed-loop system takes a cascade form
\begali{
\nonumber 
\dot x(t) &  =  f_\star (x(t) ) + \xi(x(t),\cald^I(\tilde \eta(t)))\\
\lab{cassys} 
\dot{\tilde{\eta}}(t) &= -\Delta^2(t) \Gamma P [\calg(\tilde\eta(t)+\eta) - \calg(\eta)],
}
with $ \xi(x(t),0)=0$. Assumption \ref{ass3} ensures that $x_\star$ is a GES equilibrium of the unperturbed system. Therefore, by \cite[Lemma 4.6]{KHA} the perturbed system is ISS with respect to the input $\tilde \eta(t)$. Now, the condition $\Delta \notin \call_2$ ensures that the origin of the $\tilde \eta(t)$ subsystem is globally asymptotically stable (GAS). Hence, by \cite[Lemma 4.7]{KHA},  the cascaded system \eqref{cassys} is GAS and, consequently,  \eqref{asycon} holds with all signals bounded.
\end{proof}

\begrem
\lab{rem10}
To simplify the presentation we have restricted ourselves to {\em regulation} tasks with static state-feedback controllers and aimed at {\em global} properties. The extension for tracking with dynamic controllers and local results follows {\em verbatim}. In particular, {\em local} asymptotic stability follows replacing GES by GAS in Assumption \ref{ass3}.  
\endrem
\subsection{Adaptive Control of Euler-Lagrange Systems}
\lab{subsec52}
%
In this subsection we specialize the result of the previous subsection to the practically important case of CT EL systems. On the other hand, we extend the scenario to treat the problem of {\em tracking} a reference for the state vector. To simplify the notation, throughout this section we omit the time dependence from all signals.

\subsubsection{System dynamics and adaptive control problem formulation}
\lab{subsubsec521}
%
We consider $n_q$ degrees-of-freedom (dof), possibly underactuated, EL systems with generalized coordinates $q(t)\in \rea^{n_q}$ and control vector $u(t) \in \rea^m$, $m \leq n_q$, whose dynamics is described by the EL equations of motion
\begin{equation}
\label{EL1}
{ \frac{d}{dt} \left[\nabla_{\dot q}\mathbb{L}(q,\dot q)\right] - \nabla_q \mathbb{L}(q,\dot q)= G(q) u,}
\end{equation}
where $\mathbb{L}:\rea^{n_q} \times \rea^{n_q} \to \rea$ is the Lagrangian function defined as
$$
\mathbb{L}(q,\dot q) :=   \mathbb{T}(q,\dot q) - \mathbb{U}(q),
$$
with  $\mathbb{T}:\rea^{n_q} \times \rea^{n_q} \to \rea$ the kinetic co-energy function and $\mathbb{U}:\rea^{n_q} \to \rea$ the potential energy function and $G:\rea^{n_q} \to \rea^{n_q \times m}$ is the full-rank input matrix. We restrict our attention to {\em simple} EL systems, whose kinetic energy is of the form
$$
\mathbb{L}(q,\dot q)=\frac{1}{2}\dot q^{\top}M(q)\dot q,
$$
where $M:\rea^{n_q} \to \rea^{n_q \times n_q}$is the generalized inertia matrix, which is positive definite and assumed to be {\em bounded}. See \cite{ORTbook} for additional details on this model and many practical examples.

For future reference we find convenient to write the dynamics of the EL system \eqref{EL1} as
\begin{equation}
\label{EL2}
{d \over dt }\left[  M(q) \dot q \right] - \frac{1}{2} \nabla_q \left[  \dot q^{\top}M(q)\dot q\right]   + \nabla \mathbb{U}(q) = G(q)u
\end{equation}
with the more explicit form 
\begin{equation}
\label{robdyn}
M(q) \ddot q+ C(q, \dot q)\dot q + \nabla{\mathbb{U}}(q) =G(q) u,
\end{equation}
where $C: \rea^{n_q} \times \rea^{n_q} \to \rea^{n_q \times n_q} $ represents the Coriolis and centrifugal forces matrix. As is well known \cite[Lemma 2.8]{ORTbook}, if the matrix $C(q,\dot q)$ is defined via the Christoffel symbols of the first kind, the key skew-symmetry property
\begequ
\lab{skesym}
z^\top[\dot M(q)-2C(q,\dot q)]z,\;\forall z \in \rea^{n_q},
\endequ
holds.  

Similarly to the previous subsection, we require the existence of a global {\em tracking} controller. 

\begin{assumption}
\label{ass4}\em
Given a desired bounded trajectory for the state vector $(q_\star(t), \dot q_\star(t)) \in \rea^{n_q}\times \rea^{n_q}$. Define the state tracking error $\col(\tilde q,\dot {\tilde q}):=\col(q-q_\star,\dot q - \dot q_\star).$ There exists a mapping $\beta: \rea^{n_q} \times \rea^{n_q} \times \rea^q \times \rea_{\ge 0} \to \rea^m$, such that the system
$$
M(q) \ddot q+ C(q, \dot q)\dot q + \nabla {\mathbb{U}}(q) =G(q) \beta(q,\dot q,\theta,t),
$$
has an {\em error dynamics} 
$$
\begmat{\dot {\tilde q} \\ \ddot {\tilde q}}=f_\star(\tilde q, \dot {\tilde q},t)
$$
whose origin is GES. 
\end{assumption}

The {\em control objective} is then to design a parameter estimator such that the (certainty-equivalent) adaptive control $u=\beta(q,\dot q,\hat \theta,t)$ ensures global asymptotic tracking, that is, 
\begequ
\lab{asyconel}
\liminf \col(\tilde q(t),\dot {\tilde q}(t)) = 0,
\endequ
with all signals bounded. 
\subsubsection{Derivation of the regression equation}
\lab{subsubsec522}
%
A fist step in the design is the derivation of the NPRE \eqref{nlpre} for the system \eqref{robdyn}---which was already reported in \cite{SLOLIaut}. Towards this end, we introduce the following parameterization of the inertia matrix $M(q)$ and the potential energy ${\mathbb{U}}(q)$ 
\begin{equation}
\label{parmu}
M(q) = \sum_{i=1}^\ell   m_i(q) \cals_i^m(\theta), \hspace{.5cm} {\mathbb{U}}(q)=\sum_{j=1}^r {\mathbb{U}}_j(q)  \cals_j^{\mathbb{U}}(\theta)
\end{equation}
 with  {\it known} matrices  $m_i: \rea^{n_q} \rightarrow  \rea^{n_q\times n_q}$ and functions ${\mathbb{U}}_j: \rea^{n_q} \rightarrow \rea$ and known  functions $\cals_i^m(\theta), \cals_j^{\mathbb{U}}(\theta):\rea^q \to \rea$ of the {\em unknown} physical parameters $\theta \in \rea^q$. We group together all functions $\cals_i^m(\theta), \cals_j^{\mathbb{U}}(\theta)$ in a single vector mapping $\cals:\rea^q \to \rea^p$ as
\begequ
\lab{thesthe}
\cals(\theta):=\col(\cals^m_1(\theta), \cdots, \cals^m_\ell(\theta), \cals^{\mathbb{U}}_{1}(\theta),  \cdots, \cals^{\mathbb{U}}_{r}(\theta) ) \in \rea^p,
\endequ
where $p:=\ell+r > q$. We are in position to present the following.
\begin{proposition} \em
\label{pro6}
There exists a  regressor matrix   $\Omega: \rea^{n_q} \times \rea^{n_q} \to \rea^{n_q \times p}$ such that the EL system \eqref{robdyn} satisfies the NPRE 
\begin{equation}
\label{MSNPRE}
y= \Omega  (q,\dot q) \cals(\theta)
\end{equation}
where 
\begequ
\lab{yel}
y:=H(p)\left[G(q) u \right],
\endequ
with $\theta$ and $\cals(\theta)$ defined via \eqref{parmu} and \eqref{thesthe}.
\end{proposition}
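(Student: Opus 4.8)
The plan is to start from the explicit form \eqref{EL2} of the EL dynamics, in which the acceleration $\ddot q$ appears \emph{only} inside the total time derivative $\frac{d}{dt}[M(q)\dot q]$, and to apply the stable LTI filter $H(p)$ of \eqref{fil} to both sides of the equation. The right-hand side then becomes exactly $y=H(p)[G(q)u]$ as defined in \eqref{yel}, so the whole task reduces to rewriting the filtered left-hand side as a product $\Omega(q,\dot q)\cals(\theta)$ in which $\Omega$ depends only on the measurable signals $q$ and $\dot q$ (and their stable-filter outputs), and never on $\ddot q$.

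First I would eliminate the acceleration using the operator identity $H(p)p = 1 - \lambda H(p)$, which follows from $\frac{p}{p+\lambda}=1-\frac{\lambda}{p+\lambda}$. Applied to the first term this gives $H(p)[\frac{d}{dt}(M(q)\dot q)] = M(q)\dot q - \lambda H(p)[M(q)\dot q]$, in which $M(q)\dot q$ is directly measurable and $H(p)[M(q)\dot q]$ is the output of a stable filter driven by a measurable signal---so no $\ddot q$ survives. The Coriolis-type term $-\frac{1}{2}\nabla_q[\dot q^\top M(q)\dot q]$ and the potential term $\nabla \mathbb{U}(q)$ in \eqref{EL2} already depend only on $(q,\dot q)$, hence their images under $H(p)$ are likewise available from measurements.

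Next I would substitute the linear-in-the-unknown-functions parameterizations \eqref{parmu}. Since $M(q)$ and $\mathbb{U}(q)$ are sums of known matrices/functions multiplied by the scalars $\cals_i^m(\theta)$ and $\cals_j^{\mathbb{U}}(\theta)$, and since every operation above (right-multiplication by $\dot q$, the gradient $\nabla_q$, and the linear filter $H(p)$) is linear, each filtered term factors as a measurable coefficient times the corresponding entry of $\cals(\theta)$. Collecting these coefficients column-by-column defines the regressor: for $i\in\bar \ell$ the $i$-th column is $m_i(q)\dot q - \lambda H(p)[m_i(q)\dot q] - \frac{1}{2}H(p)[\nabla_q(\dot q^\top m_i(q)\dot q)]$, and for $j\in\bar r$ the $(\ell+j)$-th column is $H(p)[\nabla \mathbb{U}_j(q)]$. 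With $\cals(\theta)$ ordered as in \eqref{thesthe}, this yields exactly \eqref{MSNPRE}.

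The only delicate point is guaranteeing that $\Omega$ contains no acceleration, and this is entirely handled by working from \eqref{EL2} rather than \eqref{robdyn} and by the identity $H(p)p = 1-\lambda H(p)$; everything else is bookkeeping that exploits the linearity of both the filter and the parameterization. It is worth noting that this is precisely the filtered-regressor construction of \cite{SLOLIaut}, here merely reorganized so that the unknown enters through the factorizable mapping $\cals(\theta)$.
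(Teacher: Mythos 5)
Your proposal is correct and follows essentially the same route as the paper: apply $H(p)$ to \eqref{EL2}, use linearity of the filter, the gradient, and the parameterization \eqref{parmu} to factor out $\cals(\theta)$, and assemble the measurable coefficients column-by-column into $\Omega(q,\dot q)$ exactly as in \eqref{omeel}. The only difference is cosmetic: you fold the properness argument $H(p)p = 1-\lambda H(p)$ into the proof itself, whereas the paper writes the columns as $H(p)[p\, m_i(q)\dot q - \tfrac{1}{2}\nabla_q(\dot q^\top m_i(q)\dot q)]$ and defers the observation that these are computable without differentiation to Remark \ref{rem11}.
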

%
\begin{proof}

Applying the LTI filter \eqref{fil} to both sides of \eqref{EL2} we get
\begin{equation}
\label{newy}
p H(p)[ M(q) \dot q ]- \frac{1}{2}  H(p)\left[ \nabla_q (  \dot q^{\top}M(q)\dot q )\right] + H(p)[ \nabla {\mathbb{U}}(q)] = y,
\end{equation}
where we have used \eqref{yel}. 

Now, using the parameterization \eqref{parmu},  the left hand side of  \eqref{newy}  can be written as 
 \begin{equation}
\sum_{i=1}^\ell  H(p)  \Big [  p  m_i(q)  \dot q-  \frac{1}{2} \nabla_q (  \dot q^\top  m_i(q)  \dot q  )\Big ]\cals_i^m(\theta) +  \sum_{j=1}^r H(p)[  \nabla {\mathbb{U}}_j(q)]  \cals_j^{\mathbb{U}}(\theta) = \Omega (q, \dot q) \cals(\theta) 
  \label{Ec1}
 \end{equation}  
where we used \eqref{thesthe} and defined the regressor matrix
\begali{
\lab{omeel}
\Omega(q,\dot q):= H(p) \begmat{p  m_1(q)  \dot q -{1\over 2} \nabla_q  (\dot q^\top  m_1(q) \dot q )\\\vdots \\p m_\ell(q)-{1\over2} \nabla_q (\dot q^\top m_\ell(q) \dot q)\\ \nabla {\mathbb{U}}_1(q)\\ \vdots \\\nabla {\mathbb{U}}_r(q)}^\top,
}
this completes the proof.
\end{proof}

\begrem
\lab{rem11}
Notice that the terms $ H(p)[p  m_i(q)  \dot q],\;i \in \bar \ell$, may be written as ${p \over p+\lambda}[m_i(q)\dot q]$, hence they can be computed without differentiation.
\endrem

\begrem
\lab{rem12}
In \cite{SLOLIaut} an alternative parameterization of the EL system \eqref{EL1} is proposed. Indeed, applying the filter \eqref{fil} to the well-known power-balance equation \cite[Proposition 2.5]{ORTbook}
$$
\dot {\mathbb{H}}=\dot q^\top G(q)u,
$$
where $\mathbb{H}(q,\dot q):=\mathbb{T}(q,\dot q)+{\mathbb{U}}(q)$ is the total energy function, it is possible to obtain a NPRE of the form \eqref{MSNPRE} with {\em scalar} $y$ and $\Omega: \rea^{n_q} \times \rea^{n_q} \to \rea^{p}$. As argued in  \cite{SLOLIaut} this is a much {\em simpler} parameterization than the one given in Proposition \ref{pro6}.  However, extensive simulated evidence shows that this yields a {\em non-identifiable} parameterization. 
\endrem
\subsubsection{Main stabilization result}
\lab{subsubsec523}
%
We are now in position of present the main result of this subsection, whose proof follows {\em verbatim} the proof of Proposition \ref{pro5}, therefore it is omitted.

\begin{proposition} \em
\lab{pro7}
Consider the EL system \eqref{robdyn} with NPRE \eqref{MSNPRE} verifying Assumptions \ref{ass1} and \ref{ass4}. Let the adaptive control be given by
 \begin{equation}
 \label{con1}
 u=\beta(q,\dot q, \cald^I (\hat \eta),t)  
 \end{equation}
together with the parameter estimator \eqref{kfil}, \eqref{esteta}, \eqref{yel} and \eqref{omeel}. If $\Delta \notin \call_2$ we have that  \eqref{asyconel} holds with all signals bounded.
\end{proposition}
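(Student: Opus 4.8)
The plan is to reproduce, step for step, the cascade argument of Proposition \ref{pro5}, replacing the state $x$ by the tracking error $\col(\tilde q,\dot{\tilde q})$ and Assumption \ref{ass3} by Assumption \ref{ass4}. First I would substitute the certainty-equivalent control \eqref{con1} into the EL dynamics \eqref{robdyn} and compare it with the ideal closed loop obtained by feeding the \emph{true} parameters into $\beta$. Writing $\hat\theta=\cald^I(\hat\eta)$, $\theta=\cald^I(\eta)$, adding and subtracting $G(q)\beta(q,\dot q,\theta,t)$, and solving for $\ddot{\tilde q}$ with $M^{-1}(q)$ (bounded, since $M(q)$ is positive definite and bounded), the error dynamics take the perturbed form
\begequ
\lab{elcas}
\begmat{\dot{\tilde q} \\ \ddot{\tilde q}}=f_\star(\tilde q,\dot{\tilde q},t)+\begmat{0 \\ \xi(q,\dot q,\tilde\eta,t)},
\endequ
where the perturbation
$$
\xi(q,\dot q,\tilde\eta,t):=M^{-1}(q)G(q)\left[\beta(q,\dot q,\cald^I(\hat\eta),t)-\beta(q,\dot q,\cald^I(\eta),t)\right]
$$
vanishes at $\tilde\eta=0$, because $\hat\eta=\eta$ forces $\hat\theta=\theta$.

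The second step is to observe that the DREM estimator \eqref{kfil}, \eqref{esteta}, now built on the EL regression \eqref{yel}, \eqref{omeel} and its scalarization \eqref{scanpre}, produces \emph{exactly} the parameter-error equation of Proposition \ref{pro3},
\begequ
\lab{eletasys}
\dot{\tilde\eta}(t)=-\Delta^2(t)\,\Gamma P\left[\calg(\tilde\eta(t)+\eta)-\calg(\eta)\right],
\endequ
whose right-hand side depends on the plant trajectory \emph{only} through the scalar signal $\Delta(t)$. Hence \eqref{elcas}--\eqref{eletasys} is a cascade in the required sense: $\tilde\eta$ drives the error dynamics through $\xi$, while $\col(\tilde q,\dot{\tilde q})$ re-enters \eqref{eletasys} only via the exogenous $\Delta(t)$. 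By Proposition \ref{pro3}(ii), the hypothesis $\Delta\notin\call_2$ renders the origin of \eqref{eletasys} GAS.

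The final step is the standard ISS/cascade conclusion. Assumption \ref{ass4} guarantees that the unperturbed error dynamics $\col(\dot{\tilde q},\ddot{\tilde q})=f_\star(\tilde q,\dot{\tilde q},t)$ has a GES origin; by the time-varying converse-Lyapunov/perturbation result \cite[Lemma 4.6]{KHA}, and using $\xi|_{\tilde\eta=0}=0$ together with smoothness of $\beta$, $M^{-1}$ and $G$, the perturbed system \eqref{elcas} is ISS with respect to the input $\tilde\eta$. Composing the GAS $\tilde\eta$ subsystem with the ISS error dynamics via \cite[Lemma 4.7]{KHA} yields global asymptotic stability of the full cascade, i.e. \eqref{asyconel} with all signals bounded. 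This is precisely why the authors declare the proof identical to that of Proposition \ref{pro5}.

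I expect the only genuine difficulty to be the \emph{global}, uniform-in-$t$ ISS estimate: because $\xi$ carries the factor $M^{-1}(q)G(q)$, one must argue that the regressor signals entering $\xi$ cannot outgrow the ISS gain and trigger finite escape. The assumed boundedness of $M(q)$ (hence of $M^{-1}(q)$ away from singularity) and of $G(q)$ along bounded trajectories is what controls this; making it rigorous in the \emph{time-varying tracking} setting, rather than the autonomous regulation setting of Proposition \ref{pro5}, is the small amount of work hidden behind the word ``verbatim'', and is the reason Remark \ref{rem10} confines the clean global statement to the hypotheses gathered in Assumption \ref{ass4}. A secondary, more cosmetic point is the mild informality of treating $\Delta(t)$ as exogenous even though it is generated along the closed loop; this is legitimate here because the convergence conclusion is stated \emph{conditionally} on $\Delta\notin\call_2$ holding for the resulting trajectory.
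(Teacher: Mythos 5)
Your proposal is correct and is essentially the paper's own argument: the paper declares the proof of Proposition \ref{pro7} to follow \emph{verbatim} that of Proposition \ref{pro5}, i.e.\ the same cascade decomposition (perturbed GES error dynamics driven by the GAS $\tilde\eta$ subsystem) combined with \cite[Lemma 4.6]{KHA} and \cite[Lemma 4.7]{KHA}. If anything, your writing of the perturbation as $\beta(q,\dot q,\cald^I(\hat\eta),t)-\beta(q,\dot q,\cald^I(\eta),t)$ is slightly more careful than the paper's shorthand $\tilde\theta=\cald^I(\tilde\eta)$, and your closing remarks correctly identify the time-varying ISS estimate and the conditional treatment of $\Delta$ as the only points hidden behind the word ``verbatim''.
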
 

In what follows we present two well-known choices of $ \beta(q,\dot q,\theta,t)$ for {\em fully actuated} systems, {\em i.e.}, $m=n_q$, and prove that they satisfy the key GES Assumption \ref{ass4} 

\bul The {\em Computed Torque Controller} in the known parameter case is given by
$$
\beta(q,\dot q,\theta,t)= M(q)[ \ddot q_\star  - K_1 \dot {\tilde q} - K_2 \tilde q] + C(q,\dot q) \dot q + g(q),
$$
resulting in the LTI closed-loop system
$$
\ddot {\tilde q} + K_1 \dot {\tilde q} + K_2 \tilde q=0,
$$
that, obviously, has a GES equilibrium at the origin for all positive definite control gains $K_1, K_2 \in \rea^{n_q \times n_q}$.

\bul The {\em Slotine-Li Controller} in the known parameter case is given by \cite{SLOLItac}
\begequ
\lab{slolicon}
\beta(q,\dot q,\theta,t)= M(q) \ddot q_r + C(q,\dot q) \dot q_r + g(q) +K_1 s,
\endequ
where we defined the signals
\begin{equation}
	\label{SLerrors}
\dot q_r := \dot q_\star  - K_2 \tilde q,\quad s := \dot {\tilde q} + K_2 \tilde q.
\end{equation}
The closed-loop system is then
$$
	M(q)\dot s +[ C(q,\dot q) + K_1] s = 0,\;\dot {\tilde q}+ K_2 \tilde q=   s,
$$
that---as indicated in \cite[Remark 4.5]{ORTbook}, see also \cite{SPOetal}---has an GES equilibrium at the origin. 

\begrem
\lab{rem13}
To the best of our knowledge, the proof of global stability of the adaptive version of the computed torque scheme proposed above is the first one reported in the literature.
\endrem
\subsubsection{Verifying Assumption \ref{ass1} on a $2$-DOF robot manipulator}
\lab{subsubsec524}
In this subsubection we show that the ``monotonizability" Assumption \ref{ass1} is verified for a 2-dof robot manipulator. The equation of motion of the robot is given by \eqref{robdyn} with  
\begali{
\nonumber
M(q)&=\begmat{ \cals_1(\theta) +2 \cals_2(\theta) \cos(q_2) & \cals_3(\theta) +\cals_2(\theta) \cos(q_2)  \\ \cals_3(\theta) +\cals_2(\theta)\cos(q_2)   &  \cals_3(\theta)}\\
      \label{MU}  
      {\mathbb{U}}(q)& =\cals_4(\theta) g \left(1+\sin(q_1+q_2)\right) + \cals_5(\theta) g \left(1+\sin(q_1)\right),
}
with $g$ the gravitational constant, the physical parameters $\theta:=\col(l_1,l_2,m_1,m_2)$, where $l_i>0$ is the the length of the link $i$ with mass $m_i>0$ for  $i=1,2$, and the mappings
\begequ
\lab{Sst}
\cals^m(\theta) := \begmat{ \theta^2_2 \theta_4+ \theta_1^2 (\theta_3 + \theta_4)  \\ \theta_1 \theta_2 \theta_4 \\ \theta_2^2 \theta_4 },\; S^{\mathbb{U}}(\theta):=\begmat{\theta_2 \theta_4 \\ \theta_1 (\theta_3 + \theta_4)},\; \cals(\theta):= \begmat{\cals^m(\theta)\\ S^{\mathbb{U}} (\theta))}.
\endequ

In the following lemma we verify Assumption \ref{ass1} for the mapping $\cals(\theta)$.

\begin{lemma}
\lab{lem2}\em
Consider the vector $\theta \in \rea^4_{>0}$ and the mapping $\cals:\rea^4_{>0} \to \rea^5_{>0}$  given by \eqref{Sst}. Assume the bounds
\begequ
\lab{bouthe}
\theta_1 \leq \theta^{M}_1,\;\theta_2^{m} \leq \theta_2 \leq \theta_2^{M},\; \theta_4^{m} \leq \theta_4.
\endequ
The mapping $\cald:\rea^4_{>0} \to \rea^4_{>0}$
$$
\eta=\cald(\theta)=\col( \theta_1, \theta_2, \theta_2 \theta_4, \theta_1 (\theta_3 + \theta_4)),
$$
with right inverse $\cald^I:\rea^4_{>0} \to \rea^4$
\begin{equation}
	\label{DI2gdl}
\theta=\cald^I(\eta)=\col(\eta_1, \eta_2, {\eta_4 \over \eta_1}- {\eta_3 \over \eta_2},{\eta_3 \over \eta_2}),	
\end{equation}
verifies Assumption \ref{ass1} with
$$
T =\begmat{0 & 1 & 0 & 0 & 0\\ 0 & 0 & 1 & 0 & 0\\0 & 0 & 0 & 1 & 0\\0 & 0 & 0 & 0 & 1 \\1 & 0 & 0 & 0 & 0},\;P = \diag\{a,a,1,1\}
$$
for any 
$$
a \geq {1 \over 4\theta^{m}_4}\left[\theta_2^{M} +{ (\theta_1^{M})^2 \over \theta_2^{m}} \right].
$$
\end{lemma}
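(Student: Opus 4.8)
The plan is to reduce Assumption~\ref{ass1} to a direct check of the Demidovich inequality \eqref{demcon2} for an explicitly computed ``good'' mapping $\calg$, and then to argue that the resulting positive-definite bound is \emph{uniform} over the admissible parameter set. Here $q=4$ and $p=5$. First I would substitute the right inverse \eqref{DI2gdl} into \eqref{Sst} to form $\calw(\eta)=\cals(\cald^I(\eta))$. The coordinate change $\cald$ is tailored so that the products collapse: since $\theta_2\theta_4=\eta_3$ and $\theta_1(\theta_3+\theta_4)=\eta_4$, a direct substitution gives
\[
\calw(\eta)=\col(\eta_2\eta_3+\eta_1\eta_4,\ \eta_1\eta_3,\ \eta_2\eta_3,\ \eta_3,\ \eta_4),
\]
so that every entry except the first is a single monomial in $\eta$.

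Applying the selection $C=\begmat{I_4 & | & 0_{4\times1}}T$ with the given $T$ discards exactly the awkward first entry $\eta_2\eta_3+\eta_1\eta_4$ and retains
\[
\calg(\eta)=C\calw(\eta)=\col(\eta_1\eta_3,\ \eta_2\eta_3,\ \eta_3,\ \eta_4).
\]
Using $(\nabla\calg)_{ij}=\partial\calg_j/\partial\eta_i$ and $P=\diag\{a,a,1,1\}$, a short computation of the Demidovich matrix $Q:=P\nabla\calg+(\nabla\calg)^\top P$ yields
\[
Q=\begmat{2a\eta_3 & 0 & \eta_1 & 0\\ 0 & 2a\eta_3 & \eta_2 & 0\\ \eta_1 & \eta_2 & 2 & 0\\ 0 & 0 & 0 & 2},
\]
which is block diagonal: a $3\times3$ block $Q_3$ and the scalar $2$ coming from the decoupled coordinate $\eta_4$ (the harmless direction carrying $\theta_3$).

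To show $Q_3>0$ I would invoke Sylvester's criterion. The first two leading minors, $2a\eta_3$ and $(2a\eta_3)^2$, are positive since $\eta_3=\theta_2\theta_4>0$ and $a>0$; the determinant factorizes as $\det Q_3=2a\eta_3\,(4a\eta_3-\eta_1^2-\eta_2^2)$, so positivity is equivalent to $4a\theta_2\theta_4>\theta_1^2+\theta_2^2$, i.e. $a>\tfrac{1}{4\theta_4}(\theta_2+\theta_1^2/\theta_2)$. Bounding the right-hand side termwise through \eqref{bouthe}---using $\theta_4\ge\theta_4^{m}$ throughout, $\theta_2\le\theta_2^{M}$ in the first term and $\theta_1\le\theta_1^{M},\ \theta_2\ge\theta_2^{m}$ in the second---shows that the stated lower bound on $a$ enforces this inequality over the whole admissible set.

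The hard part will be the \emph{uniformity} of $\rho$: Definition~\ref{def1} requires \emph{strong} $P$-monotonicity, i.e. a single $\rho>0$ with $Q_3\geq\rho I_3$ everywhere, whereas the computation above certifies only pointwise positivity---and the domain is non-compact because $\theta_4$, hence $\eta_3$, is unbounded above. I would close this gap by studying $\lambda_{\min}(Q_3)$ directly: it is continuous in $\eta$ and attains a positive minimum on the compact slab $\eta_3\in[\theta_2^{m}\theta_4^{m},N]$ (with $\eta_1,\eta_2$ confined to their bounded ranges), while on the tail $\eta_3\to\infty$ the bounded off-diagonal coupling perturbs the smallest eigenvalue only by an $O(1/\eta_3)$ term, so $\lambda_{\min}(Q_3)\to2$. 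Consequently $\inf\lambda_{\min}(Q_3)>0$, which supplies the required $\rho$ and completes the verification of Assumption~\ref{ass1}.
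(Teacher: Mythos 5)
Your proposal is correct and follows essentially the same route as the paper's own proof: the same $\calw(\eta)=\col(\eta_2\eta_3+\eta_1\eta_4,\,\eta_1\eta_3,\,\eta_2\eta_3,\,\eta_3,\,\eta_4)$, the same selection $C$ yielding $\calg(\eta)=\col(\eta_1\eta_3,\eta_2\eta_3,\eta_3,\eta_4)$, the same Demidovich matrix $P\nabla\calg+(\nabla\calg)^\top P$, positivity reduced to the identical scalar condition $4a\eta_3>\eta_1^2+\eta_2^2$, and the same termwise bounding via \eqref{bouthe}. The only cosmetic difference is that you certify positivity of the $3\times 3$ block with Sylvester's criterion and the determinant factorization $2a\eta_3(4a\eta_3-\eta_1^2-\eta_2^2)$, whereas the paper takes a Schur complement; these are interchangeable. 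Where you genuinely go beyond the paper is the last paragraph: the paper stops at \emph{pointwise} positive definiteness for each admissible $\eta$, which, on the non-compact set where $\eta_3=\theta_2\theta_4$ is unbounded above, does not by itself deliver the uniform constant $\rho$ required in \eqref{demcon1}. Your compact-slab-plus-tail argument (with $\lambda_{\min}\to 2$ as $\eta_3\to\infty$, since the off-diagonal coupling $\eta_1,\eta_2$ stays bounded) closes that gap cleanly, so your write-up is, if anything, more complete than the published proof.
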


\begin{proof}
From \eqref{psic} compute the mapping 
$$
\calw(\eta)=\cals(\cald^I(\eta))=\col(\eta_2 \eta_3 + \eta_1 \eta_4, \eta_1 \eta_3, \eta_2 \eta_3, \eta_3, \eta_4).
$$
and the matrix
$$
C =\begmat{I_4 & | & 0_{4 \times 1}}T=\begmat{0 & 1 & 0 & 0 & 0\\ 0 & 0 & 1 & 0 & 0\\0 & 0 & 0 & 1 & 0\\0 & 0 & 0 & 0 & 1}.
$$
Hence the ``good" mapping is $\calg(\eta)=\col(\calw_2(\eta),\calw_3(\eta),\calw_4(\eta),\calw_5(\eta))$, whose Jacobian yields
$$
\nabla \calg(\eta) = \left[ \begin{array}{cccc}  \eta_3 &  0& 0 &  0  \\ 0&  \eta_3 & 0 & 0\\ \eta_1 & \eta_2 & 1 & 0 \\ 0 &0 & 0 &1 \end{array}\right].
$$
Since the real part of the eigenvalues of this matrix are positive and its a Metzler matrix it admits a diagonal matrix $P$ such that \eqref{demcon2} holds \cite{BERPLE}. Computing the matrix 
$$
P \nabla \calg(\eta)+  [\nabla \calg(\eta)]^\top P=\left[ \begin{array}{cccc} 2 a\eta_3 &  0& \eta_1 &  0  \\ 0& 2a \eta_3 & \eta_2 & 0\\ \eta_1 & \eta_2 & 2 & 0 \\ 0 &0 & 0 &2 \end{array}\right],
$$
we see that it is positive definite if and only if its Schur complement of the $(2,2)$ block, given as,
$$
{1 \over 4}\begmat{4a  \eta_3 - \eta^2_1 & - \eta_1  \eta_2 \\ -\eta_1 \eta_2 & 4a  \eta_3 - \eta^2_2},
$$
is positive definite.  This, in its turn, is true if and only if
$$
a>{1 \over 4\eta_3}(\eta^2_1+\eta^2_2).
$$
The proof is completed bounding the right hand side from above, replacing $\eta$ by $\theta$ and using the bounds \eqref{bouthe}. 
\end{proof}
\subsubsection{Adaptive Slotine-Li control of the $2$-DOF robot manipulator}
\lab{subsubsec525}
In this subsubsection we present in detail the adaptive controller of Proposition \ref{pro7} with the Slotine-Li scheme for the 2-DOF robot manipulator. We show simulation results comparing the proposed scheme with the classical one relying on overparameterization.
 
To derive the NPRE \eqref{MSNPRE} we invoke \eqref{parmu} and \eqref{MU} and define
\begalis{
m_1 & :=\begmat{ 1 & 0 \\ 0& 0},\; m_2(q_2):=\cos(q_2)\begmat{ 2  & 1 \\ 1& 0},\; m_3:=\begmat{ 0 & 1 \\ 1& 1} \\
{\mathbb{U}}_1(q)&:=g[1+\sin(q_1+q_2)],\; {\mathbb{U}}_2(q_1):=g[1+\sin(q_1)].
}
Thus, the regressor matrix (\ref{MSNPRE}) takes the form 
$$
\Omega(q,\dot q)=H(p) \left[ \begin{array}{ccccc}  p \dot q_1& p \cos(q_2) (2\dot q_1 + \dot q_2) & p\dot q_2 &  g \cos(q_1+q_2) & g\cos(q_1)\\ 0 & p \cos(q_2) \dot q_1+\sin(q_2) (\dot q^2_1 + \dot q_1 \dot q_2)& p(\dot q_1 + \dot q_2) & g \cos(q_1+q_2)  & 0
      \end{array}\right].
$$

The known parameter version of the Slotine-Li controller \eqref{slolicon} may be parameterized as
$$
\beta(q,\dot q,\theta,t) := W(q,\dot q,t) \cals(\theta) + K_1s,
$$
with the matrix 
$$
W(q,\dot q,t):= \left[ \begin{array}{ccccc} \ddot q_{r1}&  \cos(q_2)(2\ddot q_{r1} + \ddot q_{r2}) -\sin(q_2)(\dot q_2\dot q_{r1} + (\dot q_1 + \dot q_2)\dot q_{r2}) & \ddot q_{r2} &  g \cos(q_1+q_2) & g\cos(q_1)\\ 0 & \cos(q_2) \ddot q_{r1} +\sin(q_2)\dot q_1\dot q_{r1} & \ddot q_{r1} + \ddot q_{r2} & g \cos(q_1+q_2)  & 0
      \end{array}\right],
$$
where $\dot q_r$ and $s$ are defined in (\ref{SLerrors}). In its standard version \cite{SLOLItac}, to get a linear parametrization, the adaptive implementation is obtained estimating the vector $\cals$, yielding
$$
\beta(q,\dot q,\hat S,t) := W(q,\dot q, \dot q_r, \ddot q_r) \hat S + K_1s.
$$
The parameter estimator is given as
$$
\dot { \hat S} :=-\Gamma W^\top(q,\dot q,t)s,
$$ 
that, as shown in \cite{SPOetal}, yields a globally stable closed-loop system and ensures global tracking of the desired references. 

In the proposed approach we estimate directly $\theta$, that is, the adaptive control is
$$
\beta(q,\dot q,\hat \theta,t) := W(q,\dot q, \dot q_r, \ddot q_r) \cals(\hat \theta) + K_1s,
$$
with the parameter estimator \eqref{kfil}, \eqref{esteta}, \eqref{yel} and \eqref{omeel}, combined with $\hat \theta=\cald^I(\hat \eta)$, where the mapping $\cald^I(\cdot)$ is given in (\ref{DI2gdl}). 

Now, we present some simulations comparing both approaches. For both controllers the gains are set as $K_1=3I_2$, $K_2=I_2$ and $\Gamma=5 I_2$. For the DREM-based controller the filter \eqref{fil} is implemented with $\lambda=2$ in Proposition \ref{pro1} and  $\lambda=1$ in Proposition \ref{pro6}, both filters with zero initial conditions.  The unknown parameters are set as $\theta_1=0.7$m; $\theta_2=0.8$m; $\theta_3=1.5$kg; and $\theta_4=0.5$kg. The initial velocities are set to zero and the initial positions are $q(0)=[0.2\pi; \;0.3\pi]$rad. The initial estimates are $\hat\theta_i(0)=0.01$ and $\hat \cals_i(0)=0.01$. The desired trajectory is 
$$
q_\star(t)=\col( 0.4\pi\sin(2t) + 0.2\pi,\; 0.3\pi\cos(t) +0.3\pi).
$$

Figure~\ref{fig:2gdl} shows the results of the simulations of the DREM-based and the standard schemes, from which we can observe that the trajectory tracking and the parameter estimation capabilities of our proposal clearly outperforms those of the classical adaptive controller. In this figure it can be also seen that consistent parameter estimation is quickly achieved. However, as indicated in Remark \ref{rem6}, the individual estimation errors $\tilde \theta_i$ are not monotonically decreasing. 

In Figure~\ref{fig:2gdlN} we change the initial conditions of the estimated parameters. From this figure we conclude that these initial conditions strongly affect the excitation of the system, encrypted in the signal $\Delta$ in \eqref{esteta}. Notice that, although there is a ``pattern" in the behavior of $\Delta^2$---as a function of the initial conditions---this is hard to predict. A similar ``sensitivity" to variations in the estimator and controller gains was observed, rendering difficult their tuning to achieve a satisfactory transient performance. The figure also shows that the norm of the estimation error $\tilde \eta$ is monotonically decreasing---as indicated in Proposition \ref{pro3}.

\begin{figure}[H]
\centering
  \includegraphics[width=0.8\textwidth]{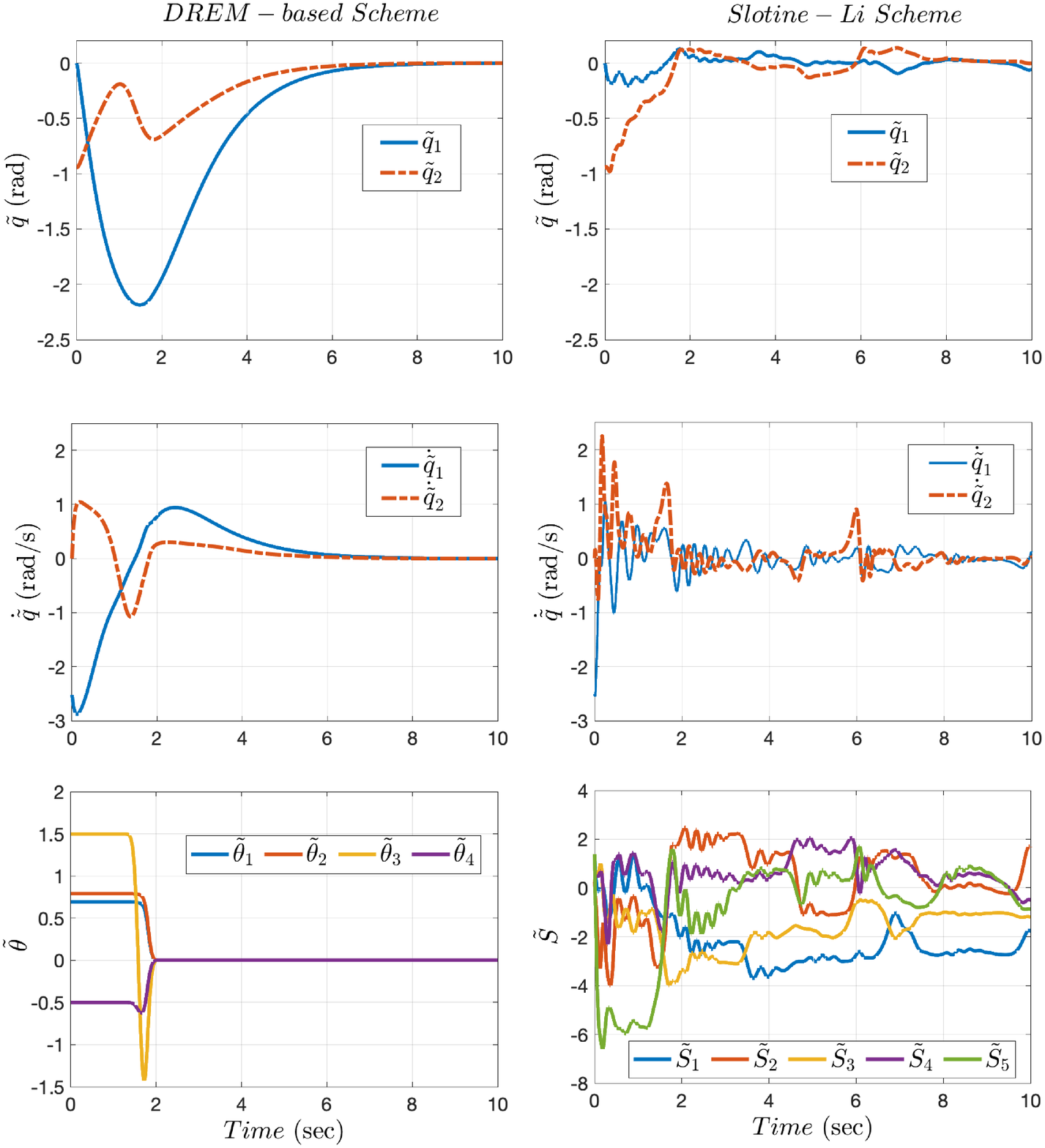}  
  \caption{Simulation results for the DREM-based adaptive scheme (left column) and the classical adaptive Slotine-Li (right column).}
  \label{fig:2gdl}
\end{figure}

\begin{figure}[H]
\centering
  \includegraphics[width=0.8\textwidth]{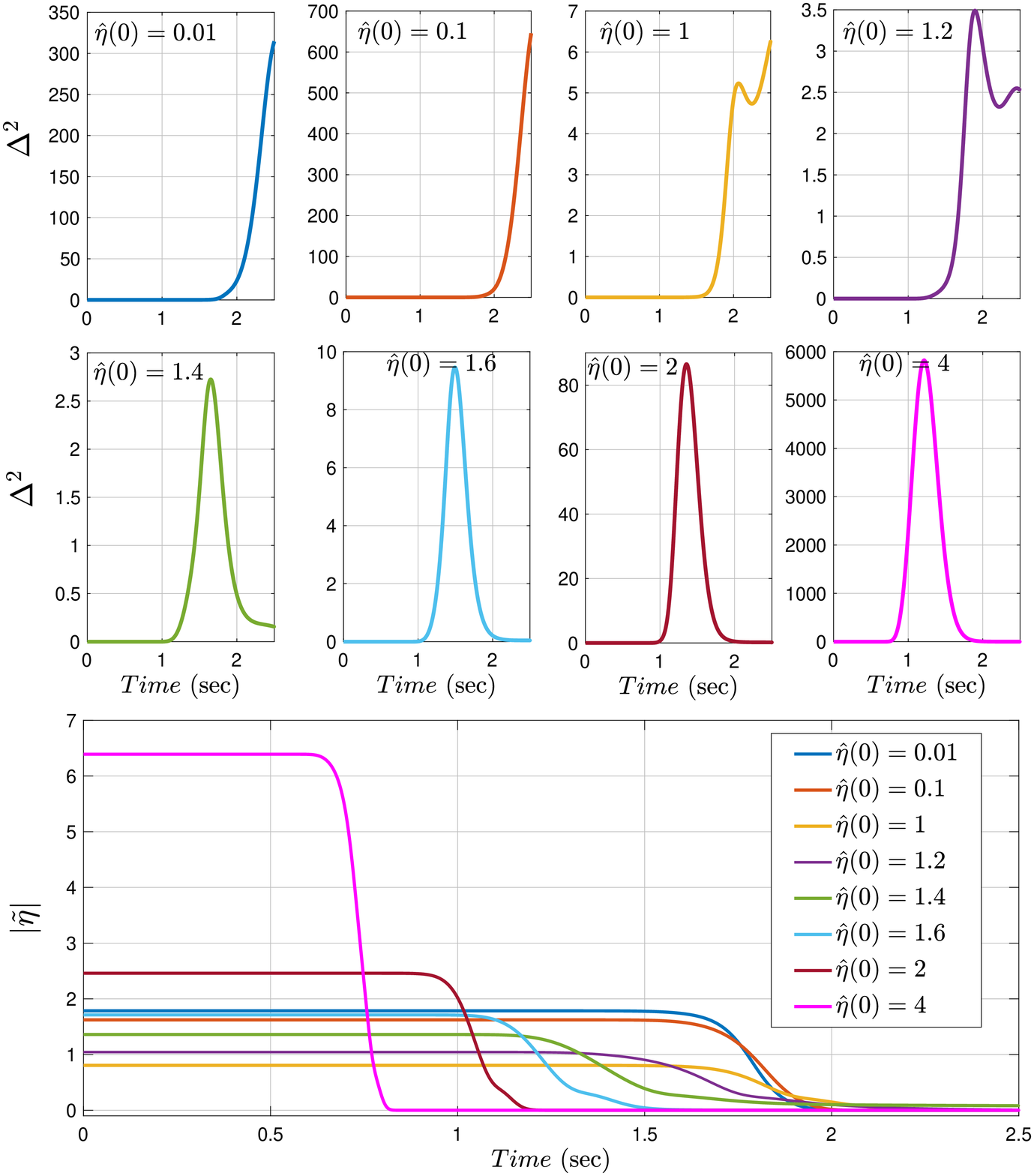}  
  \caption{``Measure of excitation" ($\Delta^2$) and norm of the parameter estimation error ($|\tilde \eta|$) for different initial conditions of the estimated parameters.}
  \label{fig:2gdlN}
\end{figure}
%
\section{Application to Nonlinearly Parameterized DT Systems}
\lab{sec6}
In this section we show how the proposed DREM-based parameter estimator can be applied to the problems of identification of a nonlinearly parameterized DT plant and to solve the direct and indirect versions of APPC. 
\subsection{Identification of a solar-heated house model}
\lab{subsec61}
%
In \cite[Example 1.1]{LJU} the problem of identification of the parameters of a solar-heated house model is discussed. The system operates in such a way that a sun heats the air in the solar panel, this air is then fanned into the heat storage. The stored energy can later be transferred to the house. The model of how the storage temperature $y_p(k) $ is affected by the fan control $u(k)$ and solar intensity $I(k)$ is given in  \cite[Example 5.1]{LJU} as
\begali{
\label{exampsys}
y_p(k)  =& (1-\theta_2)y_p(k-1) + (1-\theta_4)\frac{y_p(k-1)u(k-1)}{u(k-2)} + (\theta_4-1)(1+\theta_2)\frac{y_p(k-2)u(k-1)}{u(k-2)} + \\
&\theta_1\theta_3 u(k-1)I(k-2) - \theta_1 u(k-1)y_p(k-1) + \theta_1 (1+\theta_2)u(k-1)y_p(k-2),\nonumber
}
where $y_p(k) , u(k), I(k)$ are measurable scalar variables and $\theta:=\col(\theta_1,\dots,\theta_4)$ is a vector of constant, unknown, {\em physical} parameters of the system to be estimated. See  \cite[Example 5.1]{LJU} for an explanation of the physical meaning of the parameters $\theta$. 

Defining
\begequ
\lab{omesdt}
\Omega^\top(k):=\begmat{y_p(k-1) \\ \frac{y_p(k-1)u(k-1)}{u(k-2)} \\ \frac{y_p(k-2)u(k-1)}{u(k-2)} \\ u(k-1)I(k-2) \\ u(k-1)y_p(k-1) \\ u(k-1)y_p(k-2)}, \; 
\cals(\theta) := \begmat{1-\theta_2 \\ 1-\theta_4 \\ (\theta_4-1)(1+\theta_2) \\ \theta_1\theta_3 \\ - \theta_1 \\ \theta_1 (1+\theta_2)}.
\endequ
the model \eqref{exampsys} can be rewritten as the NLPRE \eqref{nlpre} that, as shown below, verifies the required assumptions for the direct estimation of $\theta$.

\begin{lemma}\em
\lab{lem3}
The NLPRE \eqref{nlpre}, \eqref{omesdt} verifies Assumptions \ref{ass1} and \ref{ass2} with the mapping $D : \rea^4 \rightarrow \rea^4$
$$
\eta = \cald(\theta) = \col(- \theta_1, 1-\theta_2, \theta_1\theta_3, 1-\theta_4),
$$
with right inverse $\cald^I : \rea^4 \rightarrow \rea^4$
\begequ
\theta = \cald^I(\theta) = \col(-\eta_1, 1 - \eta_2, - \frac{\eta_3}{\eta_1}, 1 - \eta_4),
\label{sim1_inv}
\endequ
the matrices 
$$
T=\begmat{0 & 0 & 0 & 0 & 1 & 0 \\ 1 & 0 & 0 & 0 & 0 & 0 \\ 0 & 0 & 0 & 1 & 0 & 0 \\ 0 & 1 & 0 & 0 & 0 & 0 \\ 0 & 0 & 0 & 0 & 0 & 1 \\ 0 & 0 & 1 & 0 & 0 & 0},\; P=I_4,
$$
and the constants $\nu=1$, $\rho=2$ and $\kappa=3$.
\end{lemma}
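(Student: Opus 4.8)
The plan is to verify Assumptions \ref{ass1} and \ref{ass2} by direct construction, exploiting the fact that the prescribed coordinate change $\cald$ and permutation $T$ are engineered precisely so that the resulting ``good'' block collapses to the identity map.

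First I would confirm that $\cald^I$ in \eqref{sim1_inv} is a right inverse of $\cald$, that is $\cald(\cald^I(\eta))=\eta$. This is a one-line substitution: inserting $\theta_1=-\eta_1,\ \theta_2=1-\eta_2,\ \theta_3=-\eta_3/\eta_1,\ \theta_4=1-\eta_4$ into $\cald(\theta)=\col(-\theta_1,1-\theta_2,\theta_1\theta_3,1-\theta_4)$ returns $\col(\eta_1,\eta_2,\eta_3,\eta_4)$, the crucial third entry using $\theta_1\theta_3=(-\eta_1)(-\eta_3/\eta_1)=\eta_3$. This step tacitly requires $\eta_1\neq0$ (equivalently $\theta_1\neq0$), which is guaranteed here since $\theta_1$ is a nonvanishing physical parameter.

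Next I would compute the transformed mapping $\calw(\eta)=\cals(\cald^I(\eta))$ of \eqref{psic} by substituting the inverse into the six entries of $\cals$ in \eqref{omesdt}, obtaining $\calw(\eta)=\col(\eta_2,\,\eta_4,\,-\eta_4(2-\eta_2),\,\eta_3,\,\eta_1,\,-\eta_1(2-\eta_2))$. The essential observation is that four of these six components are the bare coordinates $\eta_1,\dots,\eta_4$, while the two quadratic ``bad'' components are exactly the ones that the selection will discard. I would then read off $C=\begmat{I_4 & | & 0_{4\times2}}T$ as the first four rows of the given $T$, which pick entries $5,1,4,2$ of $\calw$ in that order, so that the good mapping of \eqref{gequcpsi} is $\calg(\eta)=C\calw(\eta)=\col(\eta_1,\eta_2,\eta_3,\eta_4)=\eta$. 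This identity is the entire content of the lemma.

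Everything else is then immediate. Since $\calg$ is the identity, $\nabla\calg=I_4$, so the Demidovich inequality \eqref{demcon2} with $P=I_4$ reads $2I_4\geq\rho I_4$ and holds with $\rho=2$, verifying Assumption \ref{ass1}; and the identity is globally Lipschitz with constant one, so \eqref{lipcon} holds with $\nu=1$, verifying Assumption \ref{ass2}. Finally, substituting $\rho=2$, $\nu=1$, $\lambda_{\max}\{P\}=1$ into \eqref{sig} gives $\sigma=4\gamma-\gamma^2$, and $\kappa=3$ satisfies the admissibility condition \eqref{gaicon} for the corresponding gains, e.g.\ $\gamma\le1$. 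There is no genuine obstacle here: the only computation of substance is the substitution producing $\calw$, and the construction is designed so that the monotone block degenerates to the identity, trivializing the $P$-monotonicity and Lipschitz verifications.
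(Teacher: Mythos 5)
Your proof is correct and follows essentially the same route as the paper's: compute $\calw(\eta)=\cals(\cald^I(\eta))=\col(\eta_2,\eta_4,\eta_4(\eta_2-2),\eta_3,\eta_1,\eta_1(\eta_2-2))$, note that $C=\begmat{I_4 \,|\, 0_{4\times 2}}T$ selects components $5,1,4,2$ so that $\calg(\eta)=\eta$, and read off $\rho=2$ and $\nu=1$ from $\nabla\calg=I_4$. If anything, you are slightly more thorough than the paper, which does not explicitly check the right-inverse identity (with its tacit requirement $\eta_1\neq 0$, i.e.\ $\theta_1\neq 0$) nor justify $\kappa=3$ through \eqref{gaicon}.
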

\begin{proof}
Compute the mapping
$$
\calw(\eta):=\cals(\cald^I(\eta))=\col(\eta_2, \eta_4, \eta_4(\eta_2-2), \eta_3, \eta_1, \eta_1(\eta_2 - 2)),
$$
and the matrix
$$
C:=\left[ I_4 \,|\, 0_{4\times2} \right] T = \begmat{0 & 0 & 0 & 0 & 1 & 0 \\ 1 & 0 & 0 & 0 & 0 & 0 \\ 0 & 0 & 0 & 1 & 0 & 0 \\ 0 & 1 & 0 & 0 & 0 & 0}.
$$
Hence the ``good'' mapping is 
$$
\calg(\eta)=\col(\calw_5(\eta), \calw_1(\eta), \calw_4(\eta), \calw_2(\eta))=\eta,
$$
with obvious Jacobian $\nabla \calg(\eta)=I_4$, which clearly satisfies \eqref{demcon2} and \eqref{lipcon} with the constants $\nu=1$ and $\rho=2$, respectively. 
\end{proof}

In \cite[Fig. 1.4]{LJU} an experimental record of the signals $y_p(k) , u(k), I(k)$ over a 16-hour period, sampled every 10 minutes, is given. The solar intensity $I(k)$ changes periodically with decaying form from the beginning till the end of the day, while the fan control $u(k)$ acts like a pulse signal with only two possible values. For simulation purposes a similar behavior of these signals was recreated and is presented in Fig.~\ref{fig_sim1_ctrl}. 

\begin{figure}[H]
\centering
\includegraphics[width=\textwidth]{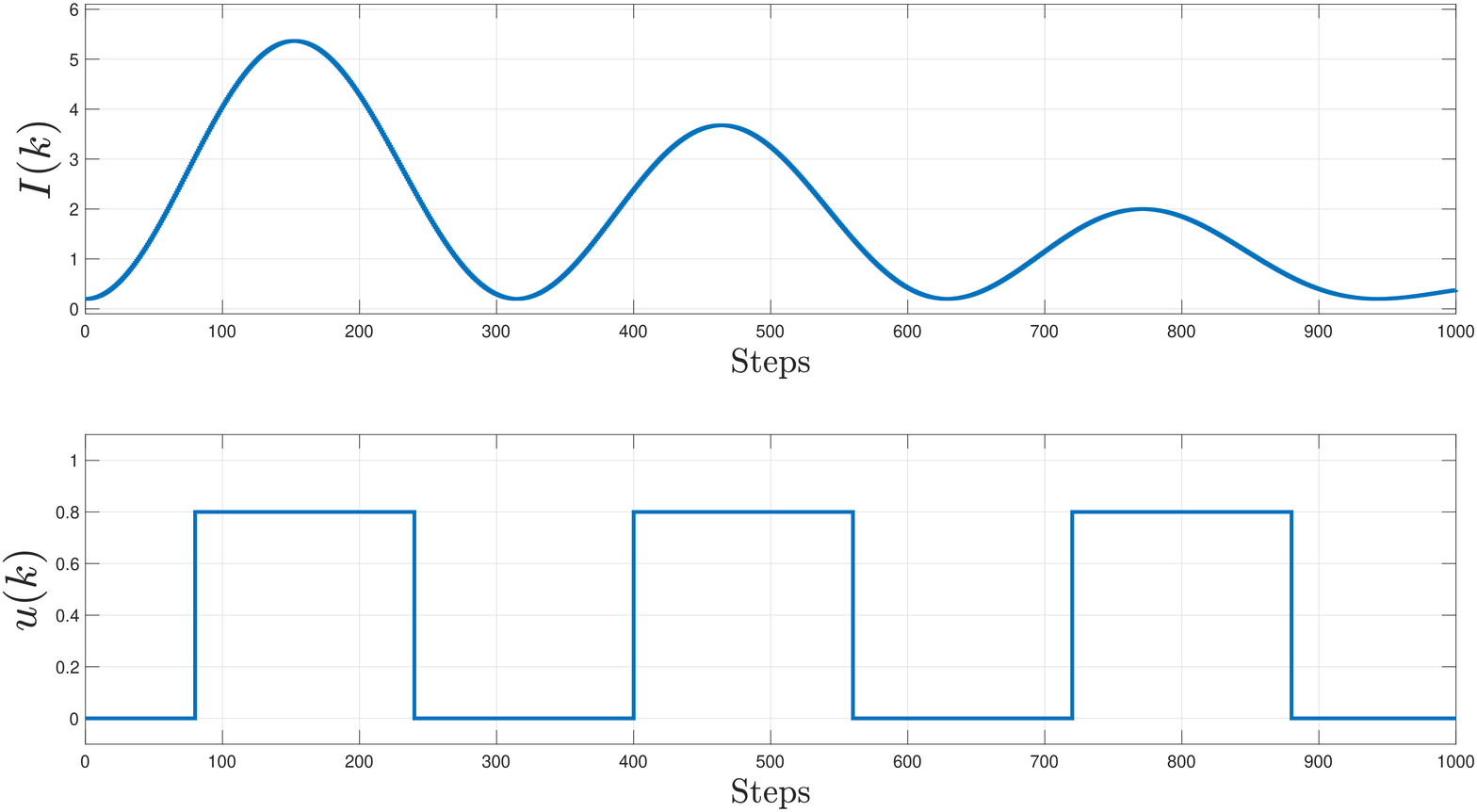}
\caption{External signals $I(k)$ and $u(k)$ used for the simulation of the solar-heated house model}
\label{fig_sim1_ctrl}
\end{figure}

The DREM-based estimator of Propositions \ref{pro2} and \ref{pro4}, with the filter pole at $\alpha=0.9$ and the adaptation gain $\gamma=1$, was simulated. To comply with \eqref{gaicon} we fixed $\kappa=3$. The value of the system parameters used in the simulations was $\theta_i=0.5, i=1,\dots,4$, and the estimator initial conditions were chosen as $\hat\eta_i(0)=\eta_i-0.5, i=1,\dots,4$.\footnote{It was observed that the behavior of the estimator remains unchanged for other values of these parameters and other initial conditions. } The transient behavior of the parameter estimation errors $\tilde\eta_i(k)$ are presented in Fig.~\ref{fig_sim1_dr}. The plot shows that convergence is achieved after the second pulse in $u(k)$. Also, although not predicted by he theory we observe a monotonic behavior of {\em each} error signal. Using the inverse transformation \eqref{sim1_inv} it is possible to calculate the estimations of model parameters $\hat\theta_i(k)$ which are shown in Fig.~\ref{fig_sim1_dr} as well.

\begin{figure}[H]
\centering
\includegraphics[width=\textwidth]{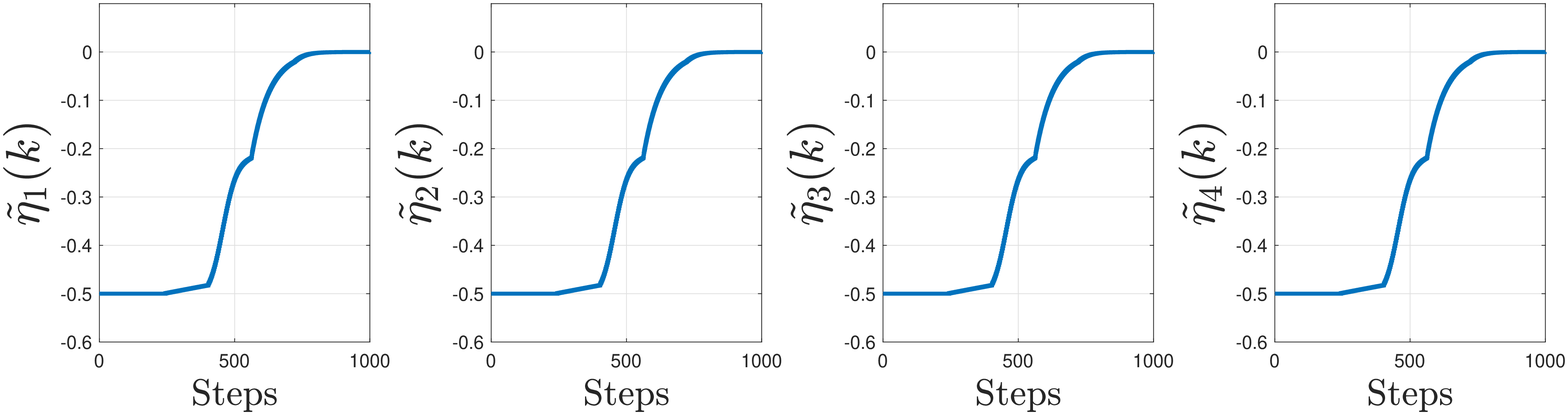}\\
\includegraphics[width=\textwidth]{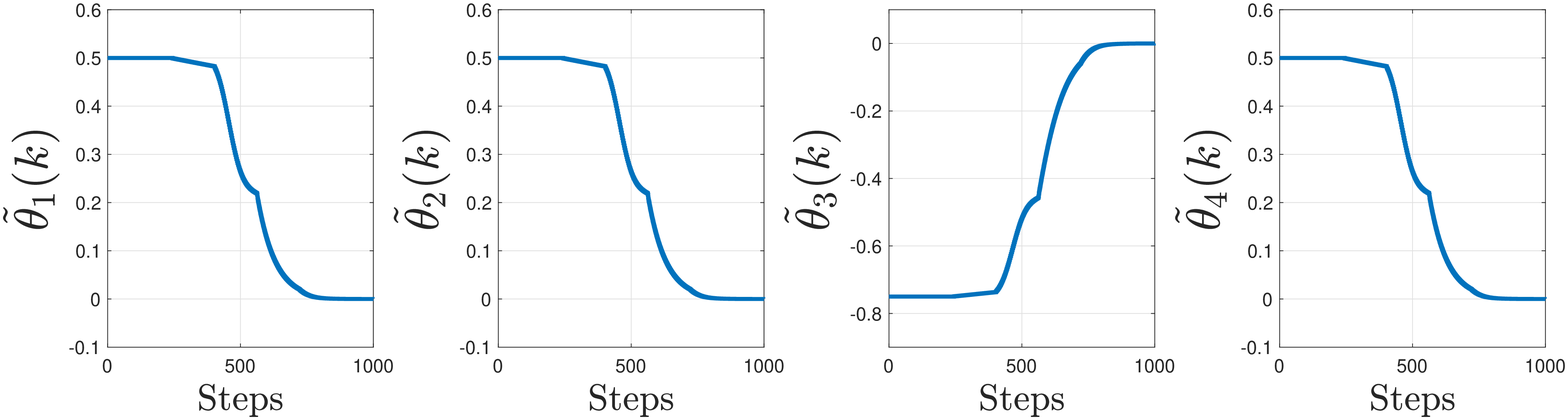}
\caption{Transient behaviour of the estimation errors  $\tilde \eta_i(k)$ and estimation errors $\tilde \theta_i(k)$ of the solar-heated house model using DREM-based estimator}
\label{fig_sim1_dr}
\end{figure}

In \cite{LJU} it is proposed to overparameterize the NPRE to obtain a linear regression. As indicated there, the price that is paid is that the value of the physical parameters $\theta$---which might be of interest in some applications---{\em cannot be recovered} from the knowledge of $\cals(\theta)$. Clearly, this is not the case for the proposed scheme since $\theta$ can be calculated with the inverse transformation \eqref{sim1_inv}. In any case, for performance comparison purposes a simulation was carried out with the overparameterized model \eqref{omesdt} using the standard gradient estimator
$$
\hat{\cals}(k) = \hat{\cals}(k-1) + {\Omega^\top(k) \over \gamma + \Omega(k) \Omega^\top(k)} [y(k) - \Omega(k) \hat{\cals}(k-1)],
$$
with $\gamma=1$. Simulation results are shown in Fig.~\ref{fig_sim1_gr}. As seen from the plots, the parameters converge faster than the DREM estimator, but they converge to {\em wrong} values.

\begin{figure}[H]
\centering
\includegraphics[width=\textwidth]{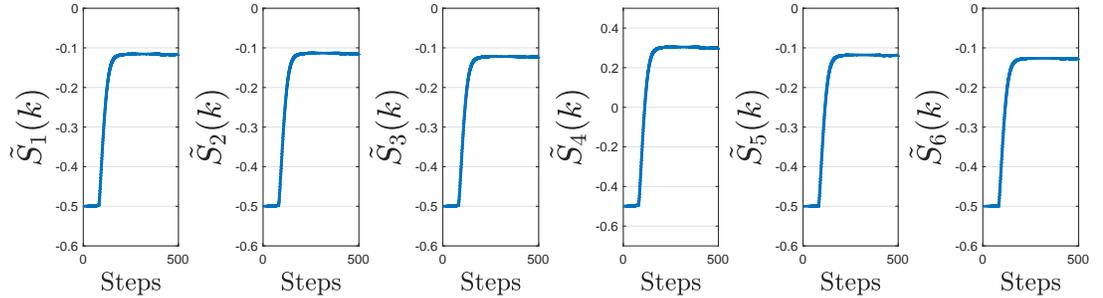}\\
\caption{Transient behaviour of the estimation errors  $\tilde S_i(k)$ of the overparameterized solar-heated house model using gradient estimator }
\label{fig_sim1_gr}
\end{figure}
%
\subsection{Adaptive Pole Placement Control of LTI Systems}
\lab{subsec62}
%
We are interested in this subsection in the problem of APPC of LTI DT system represented by it pulse transfer function
\begin{align}
	A(q^{-1})y_p(k) =B(q^{-1})u(k),	
\label{dtsys}
\end{align}
where the polynomials
$$
A(q^{-1})=1 +a_1q^{-1} + \cdots +a_{n_A}q^{-n_A},\;B(q^{-1})=b_0 +b_1q^{-1} + \cdots +b_{n_B}q^{-n_B},
$$
are coprime, with a known {\em upperbound} on their order, say $v$, but with {\em unknown} coefficients $a_i,b_i$. The pole-placement problem consists of designing a controller 
 \begin{equation}
 \label{dtcon}
L(q^{-1}) u(k) + P(q^{-1})y_p(k) =r(k)
 \end{equation}
such that the closed-loop system takes the form
$$
y_p(k) ={B(q^{-1}) \over A_m(q^{-1})}r(k),
$$
where $r(k)$ is a bounded external signal and $A_m(q^{-1})=1 +a^m_1q^{-1} + \cdots +a^m_{n_{A_m}}q^{-n_{A_m}},$ is a desired closed-loop polynomial whose roots are inside the unit circle. That is, the controller relocates the poles of the system in a desired position but preserves the open-loop zeros. For a lucid exposition of this problem see \cite[Section 5.3]{GOOSIN} and \cite{PYRetal} for a review of the recent literature.
\subsubsection{Obstacles for the adaptive implementation}
\lab{subsubsec621}
%
Computing \eqref{dtsys} in closed-loop with \eqref{dtcon} we get 
 \begin{equation}
 y_p(k)  =\frac{B(q^{-1})}{A(q^{-1})L(q^{-1}) +B(q^{-1})P(q^{-1})} r(k).
 \end{equation}
Hence, to achieve the objective, we need to verify the Bezout equation  
\begequ
\lab{sylequ}
 A(q^{-1})L(q^{-1})+{ B}(q^{-1})P(q^{-1})=A_m(q^{-1}).
\endequ
As is well-known \cite[Theorem 5.3.1]{GOOSIN}, selecting $n_{A_m}:=2v-1$, there exists unique polynomials $L(q^{-1})$ and $P(q^{-1})$, both of order $(v-1)$, solutions of \eqref{sylequ}. Indeed, it is possible to show that \eqref{sylequ} admits a matrix representation 
\begequ
\lab{matsylequ}
S(a_i,b_i) \eta =\col(a^m_0,a^m_1,\dots,a^m_{2v-1}),
\endequ
where 
\begequ
\lab{eta}
\eta:= \col(l_0,l_1,\dots,l_{v-1},p_0,p_1,\dots,p_{v-1})
\endequ 
and $S(a_i,b_i) \in \rea^{2v \times 2v}$---called the Sylvester  matrix---is linearly dependent on the coefficients  $a_i,b_i$, and is {\em full rank} if and only if $A(q^{-1})$ and $B(q^{-1})$ are coprime.

It is well-known that the adaptive version of the previous controller, called APPC, suffers from serious drawbacks \cite{GOOSIN,PYRetal}. In its {\em indirect} version---that is when we estimate the parameters of the plant $a_i,b_i$ and then compute from them, via the solution of \eqref{matsylequ}, the parameters of the controller $l_i,p_i$---the problem is that the Sylvester matrix with the estimated parameters $\hat a_i(k),\hat b_i(k)$ may loose rank during the transient behavior. Although this phenomenon can be avoided adding parameter projections, the prior knowledge required to implement this efficiently is never available in practice and relies on the availability of PE, see \cite[Section 1]{PYRetal}.  

On the other hand, in its {\em direct} version the estimation of the {\em controller} parameters involves a NPRE. Indeed, applying \eqref{sylequ} to the output of the plant $y_p(k) $ we get
\begali{
\nonumber
& L(q^{-1})A(q^{-1})y_p(k) +P(q^{-1})B(q^{-1})y_p(k)   =A_m(q^{-1})y_p(k) \\
\nonumber
& \Leftrightarrow L(q^{-1})B(q^{-1})u(k)+P(q^{-1})B(q^{-1})y_p(k)   =A_m(q^{-1})y_p(k) \\
\lab{pprep}
& \Leftrightarrow B(q^{-1})[L(q^{-1})u(k)+P(q^{-1})y_p(k) ]  =A_m(q^{-1})y_p(k) ,
}
where we invoked \eqref{dtsys} to get the second equation. The known parameter version of the direct pole-placement controller may be written in the LRE form
$$
u(k) + \eta^\top  \psi(k)=r(k)
$$
where we have used the fact that $L(q^{-1})$ is monic and defined
$$
\psi(k):=\col(y_p(k) ,\dots,y_p(k-v+1),u(k-1),\dots,u(k-v+1)) \in \rea^{2v-1},
$$
with $\eta$, as defined in \eqref{eta}, contains the unknown coefficients of the polynomials $L(q^{-1})$ and $P(q^{-1})$. A direct adaptive implementation of this controller takes then the form
$$
u(k) +\hat \eta^\top (k) \psi(k)=r(k),
$$
where $\hat \eta(k)$ denotes the estimates of $\eta$. The difficulty of designing an estimator for the controller parameters $\eta$ is due to the fact that, in terms of $\eta$, \eqref{pprep} defines a parameterization of  the form 
\begequ
\lab{bthepeta}
B(q^{-1})[u(k)+\eta^\top  \psi(k)]=A_m(q^{-1})y_p(k)=:y_p(k),
\endequ
which is {\em bilinear} because the polynomial $B(q^{-1})$ is {\em unknown}. 

In the next two subsubsections we show that using the results reported in the paper it is possible to overcome the two obstacles mentioned above. To simplify the presentation we illustrate this fact with simple representative  examples, that can be easily extended to the general case. 
 
\subsubsection{DREM-based indirect APPC}
\lab{subsubsec622}
%
Consider the LTI DT system
\begin{equation}
\label{exLTI}
 y_p(k+1) + \theta y_p(k)= u(k)+ \theta^3 u(k-1),
\end{equation}
where, to ensure the coprimeness assumption, $\theta \neq \pm 1$. Fixing a dead-beat objective, {\em e.g.}, $A_m(q^{-1})=1$, and selecting $L(q^{-1})= l_0 + l_1 q^{-1}$ and $P(q^{-1}) = p_0 + p_1 q^{-1}$ the Bezout equation \eqref{sylequ} takes the form
 \begin{equation}
 (1+\theta q^{-1}) (l_0 + l_1 q^{-1}) + q^{-1} (1 + \theta^3 q^{-1}) (p_0 + p_1 q^{-1})=1.
 \end{equation}
The latter can be rewritten as 
 \begin{equation}
 \label{sysa}
 \left[ \begin{array}{ccccc}  1 & 0 & 0 & 0\\ \theta & 1& 1 &  0 \\
 0 & \theta & \theta^3 & 1 \\ 0 & 0 & 0 & \theta^3
      \end{array}\right]  \begmat{l_0 \\ l_1 \\ p_0  \\ p_1}= \begmat{1 \\ 0  \\ 0  \\ 0},
 \end{equation}
whose solution is  $l_0 =1$, $p_1 =0$  and 
 \begin{equation}
 \label{sysa0}
 \left[ \begin{array}{cc}    1& 1  \\
 \theta & \theta^3
      \end{array}\right] \begmat{l_1  \\ p_0 }  =\begmat{- \theta  \\ 0 },
       \end{equation}
which corresponds to 
$$
\begmat{l_1  \\ p_0 }=  \frac{1}{\theta^3 - \theta}\begmat{-\theta^4  \\ \theta^2 }.
$$
Hence,  the known-parameter controller \eqref{dtcon} takes the form
\begin{equation}
\label{conLTI}
u(k)=-\frac{1}{\theta^3-\theta} \left[ \theta^2 y_p(k)-\theta^4 u(k-1) \right]+r(k)
\end{equation}
and yields the desired closed-loop system 
$$
y_p(k)=q^{-1}(1+\theta^3 q^{-1}) r(k).
$$

Obviously, the system admits an NPRE  of the form \eqref{nlpre} with
 \begin{equation}
\label{yoms}
y_p(k):=  y_p(k)-u(k-1),\;  \Omega(k):=  \begmat{-y_p(k-1) \\ u(k-2)}, \hspace{3mm}  \cals(\theta):=  \begmat{\theta \\ \theta^3}.
 \end{equation}
If we overparametrize the NPRE and estimate the vector $\cals \in \rea^2$ the controller parameters are computed from 
 \begin{equation}
 \label{sysa1}
 \left[ \begin{array}{cc}    1& 1  \\
 {\hat \cals}_1(k) & {\hat \cals}_2(k)
      \end{array}\right] \begmat{\hat l_1(k)  \\ \hat p_0(k) }  =\begmat{- {\hat \cals}_1(k)  \\ 0 },
       \end{equation}
which yields the adaptive controller 
\begin{equation}
u(k)=- \frac{1}{\hat {\cals}_2(k)-\hat {\cals}_1(k)} \left [ {\hat \cals_1}^2(k)y_p(k) -  \hat {\cals}_1(k) {\hat \cals}_2(k) u(k-1)  \right] +r(k).
\end{equation} 
Clearly, the controller computation has a singularity on the line ${\hat \cals}_1(k)={\hat \cals}_2(k)$. On the other hand, if we estimate $\theta$, the adaptive version of \eqref{conLTI} has a singularity only at the points $\hat \theta(k)=\pm 1$. 

The simulation scenario was a system with {\em changing} parameters
$$
\theta=\;\left\{
\begarr{cc}
{1 \over 2} & 0 \leq t < 5  \\
 -{1 \over 2} & 5 \leq t.
\endarr
\right.
$$
The external signal $r(k)$ is a sinusoidal function. The initial conditions of the estimators were taken as $\hat \theta(0)=\hat \cals_1(0)={1 \over 2}$ and  $\hat \cals_2(0)={1 \over 8}$. For $ 0 \leq t < 5$ we have $\cals_1(\theta)<\cals_2(\theta)$ and  for $ t \geq 5$ we have $\cals_1(\theta) > \cals_2(\theta)$. Therefore, if the estimates $\hat S(k)$ converge they have to {\em cross through singularity}.  On the other hand, the DREM-based scheme shouldn't leave the singularity-free region $\theta \in (-1,1)$ because of the monotonicity property. 

The simulation results for the DREM-based estimation of $\theta$ with $\gamma=1$ and $\kappa=2$ are presented in Fig.~\ref{fig_sim2_dr}. As seen from the figure, the controller parameter error converges to zero and the estimated parameter $\hat\theta(k)$ does not leave the singularity-free region $\theta \in (-1,1)$. As expected, the tracking error $e(k) := y_p(k)-B(q^{-1})r(k)$ also converges to zero in the closed-loop system.

\begin{figure}[H]
\centering
\includegraphics[width=\textwidth]{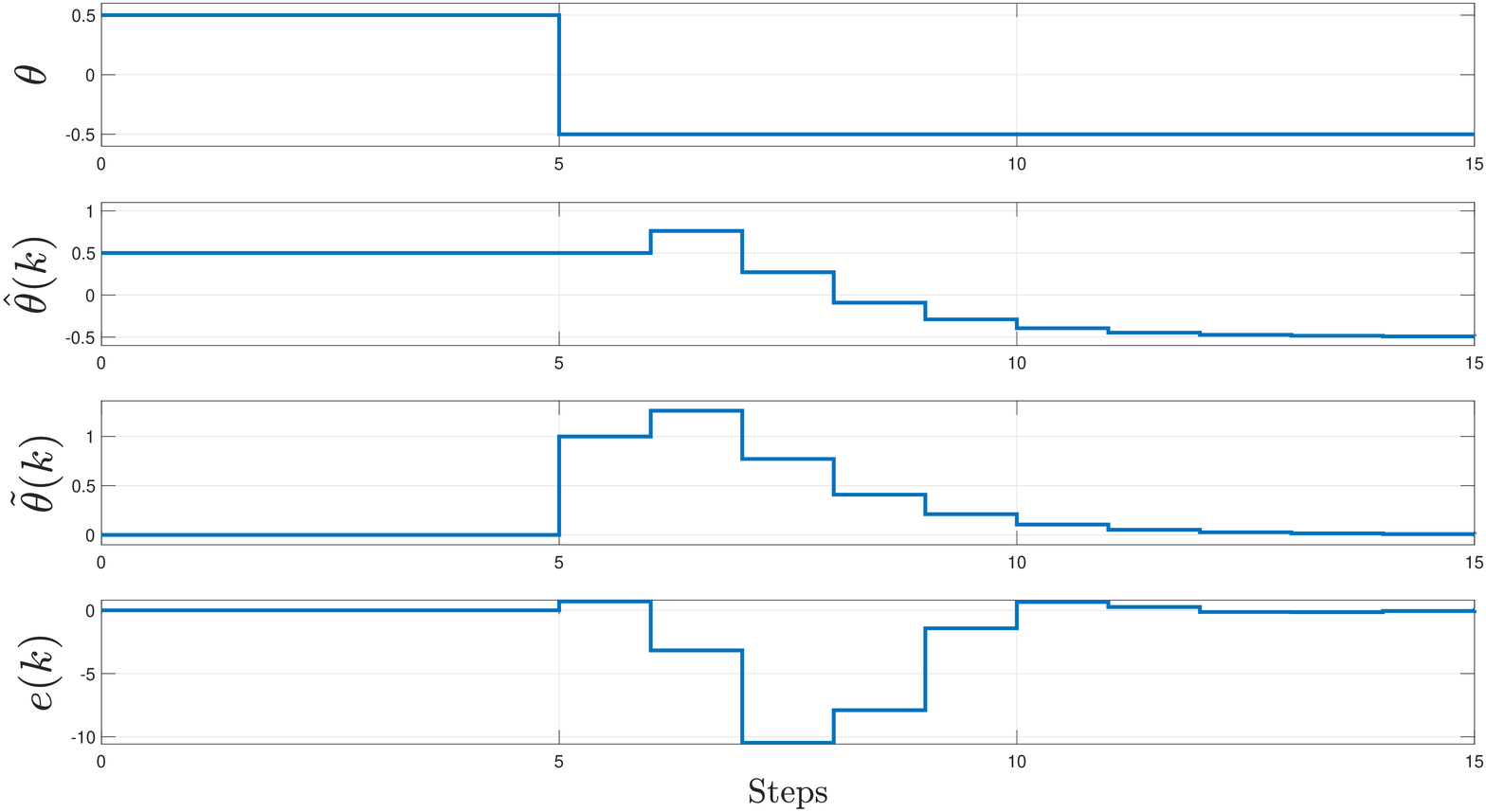}\\
\caption{Transient behaviour of the systems switched parameter $\theta$, its estimate $\hat\theta(k)$, the estimation error  $\tilde\theta(k)$ and the tracking error $e(k)$ in the indirect APPC task using DREM-based estimator}
\label{fig_sim2_dr}
\end{figure}

For performance comparison a simulation was completed with the overparameterized model \eqref{yoms} using the standard gradient estimator
$$
\hat{\cals}(k) = \hat{\cals}(k-1) + {\Omega^\top(k) \over \gamma + \Omega(k) \Omega^\top(k)} [y(k) - \Omega(k) \hat{\cals}(k-1)],
$$
with $\gamma=1$ and the adaptive controller \eqref{sysa1}. The simulation results are presented in Figs.~\ref{fig_sim2_grs} and ~\ref{fig_sim2_gr}. As seen from  Fig.~\ref{fig_sim2_grs} the estimated parameters cross the singularity line $\cals_1=\cals_2$. However, due to the DT nature of the equations, they ``jump" through it without inducing an unacceptable transient behavior in the control calculation---a coincidence that, of course, cannot be theoretically predicted. As seen from Figs.~\ref{fig_sim2_gr}, parameter and tracking error convergence is twice as slow as the one of the DREM estimator. 

\begin{figure}[H]
\centering
\includegraphics[width=\textwidth]{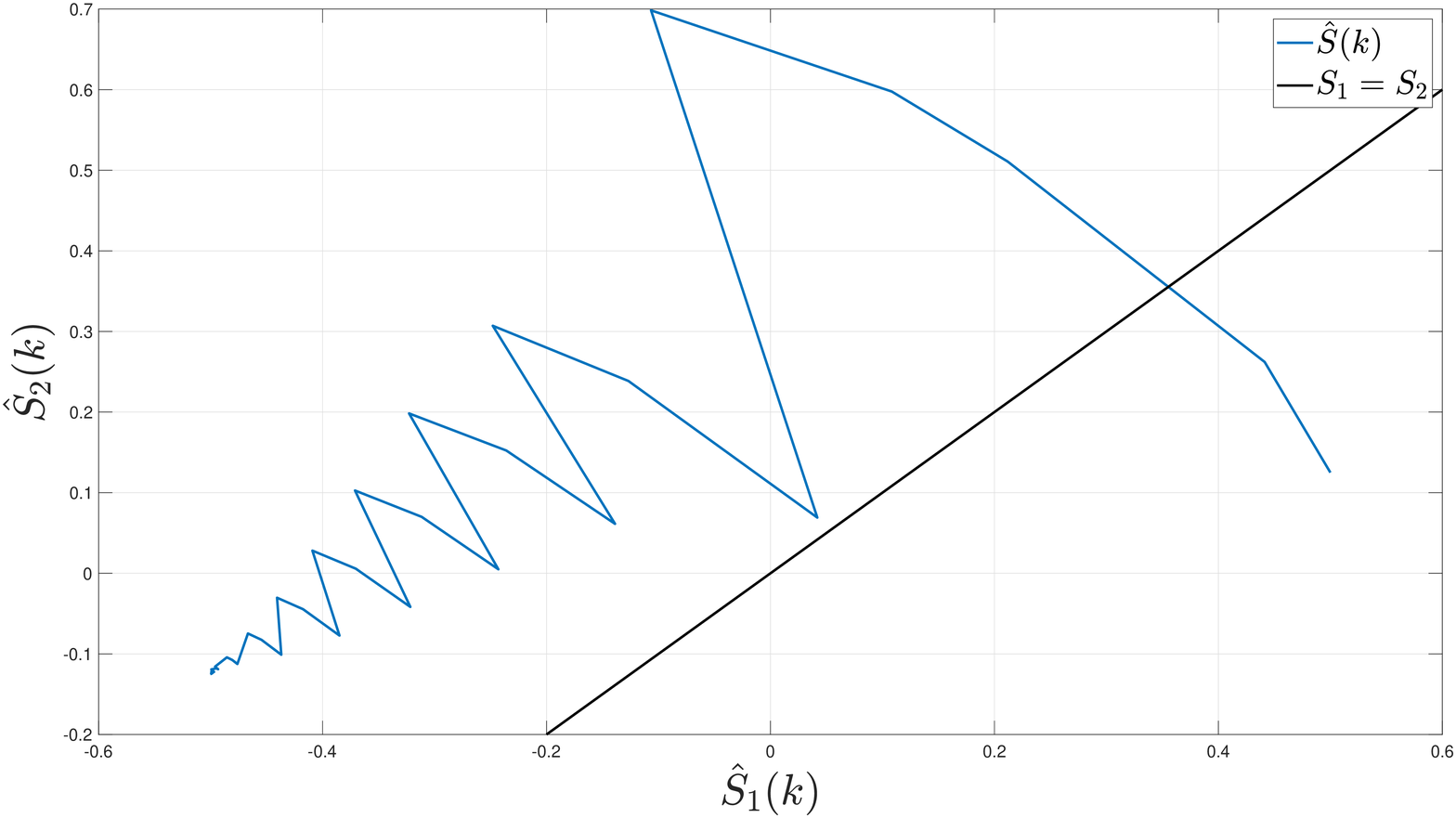}\\
\caption{Transient behaviour of the estimated parameters $\hat\cals_1(k)$ and $\hat\cals_2(k)$ and the singularity line $\cals_1=\cals_2$ in the plane $\cals_1-\cals_2$}
\label{fig_sim2_grs}
\end{figure}

\begin{figure}[H]
\centering
\includegraphics[width=\textwidth]{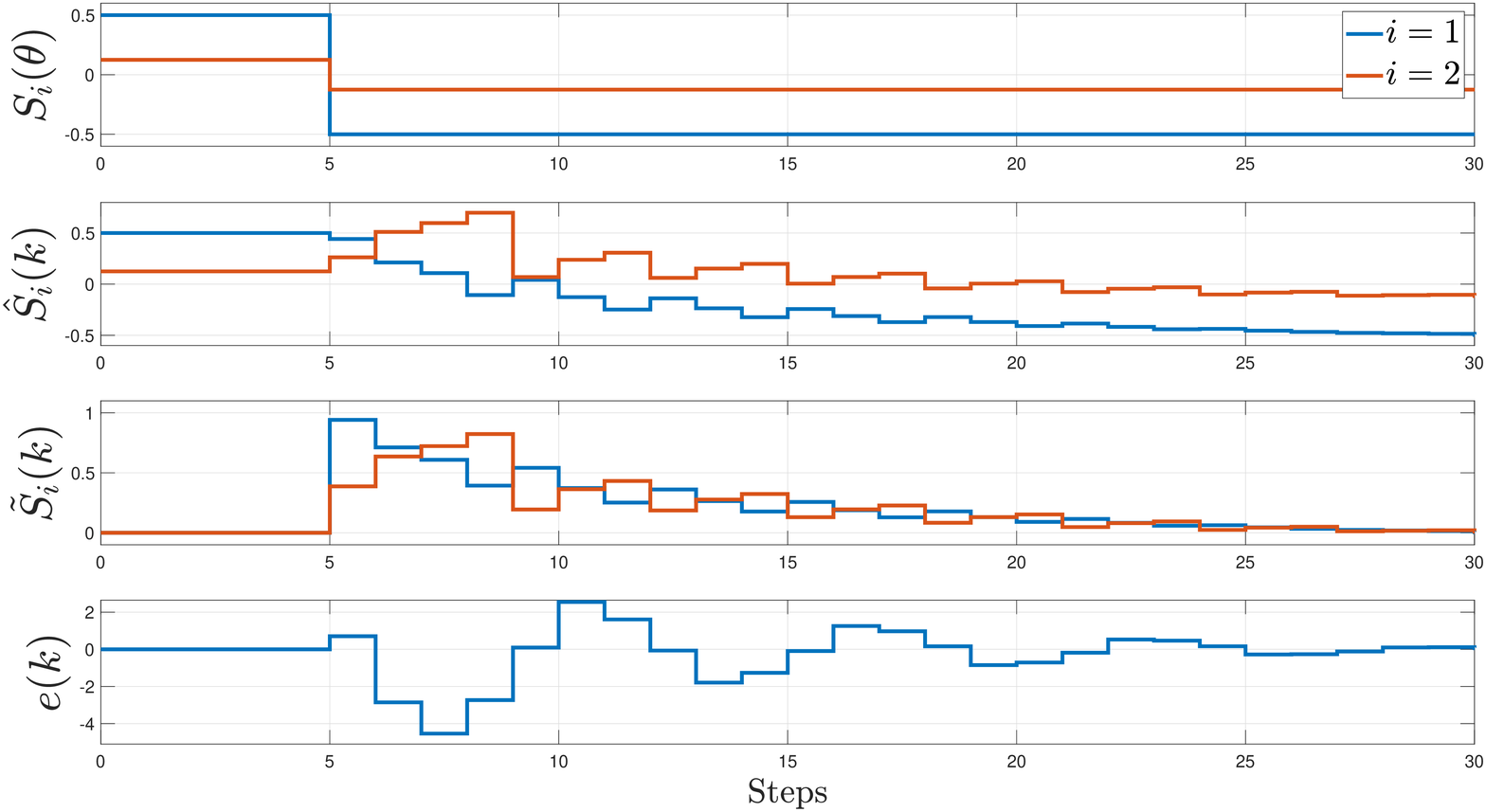}\\
\caption{Transient behaviour of the switching parameters of the system $\cals_i(\theta)$, their estimation $\hat\cals_i(k)$, the estimation error  $\tilde\cals_i(k)$ and the tracking error $e(k)$ in the overparameterized indirect APPC}
\label{fig_sim2_gr}
\end{figure}
\subsubsection{DREM-based direct APPC}
\lab{subsubsec623}
%
In this subsubsection we illustrate with a simple example how the DREM-based direct APPC avoids the bilinearity problem mentioned in Subsection \ref{subsec62}.  Towards this end, consider the DT system \eqref{dtsys} with
\begalis{
	A(q^{-1})&=1 +a_1q^{-1},\;B(q^{-1})=b_1q^{-1} +b_{2}q^{-2},
}
and choose a deadbeat control objective, that is, $A_m(q^{-1})=1$. Since $v=2$ the known parameter control law \eqref{dtcon} takes the form 
$$
(1 +l_1q^{-1})u(k)+(p_0 +p_{1}q^{-1})y_p(k) =r(k).
$$
Hence \eqref{pprep} becomes
$$
(b_1q^{-1} +b_{2}q^{-2})[(1 +l_1q^{-1})u(k)+(p_0 +p_{1}q^{-1})y_p(k) ]=y_p(k) .
$$
By solving equation \eqref{sylequ} it is easy to see that $p_1=0$, reducing the equation above to the form
$$
(b_1q^{-1} +b_{2}q^{-2})[(1 +l_1q^{-1})u(k)+p_0y_p(k) ]=y_p(k) .
$$
Some simple calculations show that the latter may be written in the $5$-dimensional LRE form 
\begequ
\lab{dtlre}
y_p(k)   = \Omega^\top (k) \cals(\theta),
\endequ
where
\begali{
\nonumber
\theta &:=\col(b_1,b_2,p_0,l_1)\\
\nonumber
\Omega(k)&:=\col(y_p(k-1),y_p(k-2),u(k-1),u(k-2),u(k-3))\\
\cals(\theta)&:=\col(\theta_1 \theta_3,\theta_2 \theta_3,\theta_1,\theta_1 \theta_4+\theta_2,\theta_2 \theta_4).
\lab{thephi}
}
The bijective mapping $D : \rea^4\rightarrow \rea^4$
$$
\eta = \cald(\theta) = \col(\theta_1, \theta_2\theta_3, \theta_1\theta_3, \theta_2\theta_4),
$$
with right inverse $\cald^I : \rea^4 \rightarrow \rea^4$
\begequ
\theta = \cald^I(\theta) = \col\Big(\eta_1, \frac{\eta_2\eta_1}{\eta_3}, \frac{\eta_3}{\eta_1}, \frac{\eta_3\eta_4}{\eta_2\eta_1}\Big),
\label{sim2_inv}
\endequ
verifies Assumption \ref{ass1} with,
$$
T:=\begmat{0 & 0 & 1 & 0 & 0 \\ 0 & 1 & 0 & 0 & 0 \\ 1 & 0 & 0 & 0 & 0 \\ 0 & 0 & 0 & 0 & 1 \\ 0 & 0 & 0 & 1 & 0},\; P=I_4.
$$
Indeed, computing the mapping
$$
\calw(\eta):=\cals(\cald^I(\eta))=\col\Big(\eta_3, \eta_2, \eta_1, \frac{\eta_3\eta_4}{\eta_2} - \frac{\eta_2\eta_1}{\eta_3}, \eta_4\Big),
$$
and the matrix
$$
C:=\left[ I_4 \,|\, 0_{4\times1} \right] T = \begmat{0 & 0 & 1 & 0 & 0 \\ 0 & 1 & 0 & 0 & 0 \\ 1 & 0 & 0 & 0 & 0 \\ 0 & 0 & 0 & 0 & 1 }
$$
Hence, we get the ``good'' mapping is 
$$
\calg(\eta)=\col(\calw_3(\eta), \calw_2(\eta), \calw_1(\eta), \calw_5(\eta))=\eta,
$$
whose Jacobian is $\nabla \calg(\eta)=I_4$, which clearly satisfies \eqref{demcon2} and \eqref{lipcon} with the constants $\nu=1$ and $\rho=2$, respectively..

%
%
%

\section{Conclusions}
\lab{sec7}
%
It has been shown that the DREM procedure can be used to estimate the parameters of a CT or DT NPRE of the form \eqref{nlpre}, provided the ``monotonizability" Assumption \ref{ass1} holds and some weak excitation conditions---encrypted in the scalar signal $\Delta$---are satisfied. The applicability of the method has been illustrated with several classical examples. 

We are currently pursuing the following research avenues. 
\begenu[{\bf R1}]
\item As indicated in Remark \ref{rem12} the highly attractive parameterization of EL systems proposed in \cite{SLOLIaut} seems to yield a non-identifiable NPRE. A rigorous proof of this claim is yet to be established.

\item Although the DREM estimator has a few tuning gains, {\em e.g.}, the filter constants ($\lambda$ for CT, and $\alpha$ for DT) and the adaptation gain $\gamma$, their impact on the transient behavior is hard to predict---see Subsubsection \ref{subsubsec525}. A more thorough analysis of the sensitivity of the design {\em vis-\`a-vis} these coefficients is yet to be derived.

\item Although avoiding overparameterization to handle NPRE seems, in principle, a sensible objective, it is not clear under which conditions this approach is really more convenient. Particularly considering that this is, until now, only applicable to ``monotonizable" NPRE. 

\item The verification of the conditions of Proposition \ref{pro1} is carried out in our examples via direct inspection. A deeper understanding of the underlying structural features of the mapping $\cals(\theta)$ under which this is possible would be highly desirable. It seems that such a study should appeal to principles of differential algebra.    
\endenu
%
\section*{Acknowledgment}
This paper is partly supported by the Ministry of Education and Science of Russian Federation (14.Z50.31.0031, goszadanie no. 8.8885.2017/8.9), NSFC (61473183, U1509211) and the Mexican CONACyT Basic Scientific Research grant CB-282807.



\end{document}